\newenvironment{customthm}[1]
  {\innercustomthm}
  {\endinnercustomthm}
\newtheorem{theorem}{Theorem}[section]
\newtheorem{proposition}[theorem]{Proposition}
\newtheorem{lemma}[theorem]{Lemma}
\newtheorem{corollary}[theorem]{Corollary}
\theoremstyle{remark}
 \numberwithin{equation}{section}
\renewcommand{\Re}{\operatorname{Re}}
\renewcommand{\Im}{\operatorname{Im}}
\def \R {{\mathbb R}}
\def \Z {{\mathbb Z}}
\newcommand{\area}{\operatorname{area}}
\newcommand{\intinf}{\int_{-\infty}^\infty}
\newcommand{\betaomega}{\mathcal B_\Omega}
\newcommand{\dd}{\mathrm{d}}
\newcommand{\oscI}{\mathcal I} 
\begin{document}

\title{Prime lattice points in ovals}
\author{Bingrong Huang and Ze\'ev Rudnick}
\address{Raymond and Beverly Sackler School of Mathematical Sciences, Tel Aviv University, Tel Aviv 69978, Israel}
\email{bingronghuangsdu@gmail.com}
\email{rudnick@post.tau.ac.il}

\date{\today}
\begin{abstract}
We  study the distribution of lattice points with prime coordinates lying in the dilate of a convex planar domain having smooth boundary, with nowhere vanishing curvature.  Counting lattice points weighted by a von Mangoldt function gives an asymptotic formula, with the main term being the area of the dilated domain, and our goal is to study the  remainder term. Assuming the Riemann Hypothesis, we give a sharp upper bound, and further assuming that the positive imaginary parts of the zeros of the Riemann zeta functions are linearly independent over the rationals allows us to  give a formula for the value distribution function of the properly normalized remainder term.
 \end{abstract}

\keywords{Prime lattice points, ovals, limiting distribution, Riemann Hypothesis}
\subjclass{11P21, 11N05, 11K70, 60D05, 62E20}

\dedicatory{Dedicated to Dorian Goldfeld on the occasion of his 71$^{st}$ birthday}

\maketitle

\section{Introduction}
Our goal in this note is to investigate the distribution of lattice points with prime coordinates lying in dilates of a planar convex set. Before stating our findings, we recall what is known for the classical lattice point problem.

\subsection{Lattice points} There is a vast body of work dedicated to the question of the number of lattice points lying in the family of dilates of a planar domain. A typical context is when one takes a domain $\Omega\subset \R^2$, which is compact, convex, contains the origin in its interior,  with smooth boundary having nowhere zero curvature. One can call such a domain an ``oval". For $R>0$, let $R\Omega$ denote the dilated domain, and let
$$
N_\Omega(R) := \#\Z^2\cap R\Omega
$$
be the number of lattice points in the dilated domain $R\Omega$. Under our assumptions, it is known that
$ N_\Omega(R)\sim \area(\Omega)R^2$ as $R\to \infty$,
and much work has been devoted to bounding the size of the remainder term.
 In the beginning of the 20-th century, it was shown 
that
$$  N_\Omega(R) =\area(\Omega)R^2+O(R^{2/3})$$
and since then  the exponent $2/3$ has been improved somewhat, starting with van der Corput \cite{vdC}, see  \cite{Huxley world record}.
It is conjectured that the correct exponent is $1/2+o(1)$.
 One cannot improve the exponent beyond $1/2$, and there exists arbitrarily large $R$ such that the remainder term is, in absolute value, $\gg R^{1/2}(\log R)^{1/4}$ \cite{Nowak}; in the case of the circle, this is a classical result of Hardy (see \cite{Soundararajan}).
Note that  if one allows points of vanishing curvature, then the remainder term can in some cases be larger than $R^{2/3}$, for instance in the case of the superellipse $\{x^{2k}+y^{2k}\leq  1\}$ the remainder term may be larger than $R^{1-1/(2k)}$ for arbitrarily large $R$ \cite{Randol1, CdV}.

The normalized remainder term
$$ F_\Omega(R):= \frac{ N_\Omega(R) - \area(\Omega)R^2}{R^{1/2}}$$
has a limiting value distribution \cite{Wintner, BCDL, BleherDuke}, that  is there is a measure $\dd\nu_\Omega$ so that for any bounded (piecewise) continuous function $G$,
$$\lim_{T\to \infty} \frac 1T \int_0^T G\Big(F_\Omega(R) \Big) \dd R = \intinf G(u) \dd \nu_\Omega(u).$$
When $\Omega$ is a circle \cite{HB1, BCDL}, or for certain ellipses \cite{Btori}, the limiting distribution   is absolutely continuous, that is $\dd\nu_\Omega(u) = f_\Omega(u)\dd u$; and  the density $f_\Omega$  is real analytic, in particular supported on all of the real line, though with very rapidly decaying tails:
$ f_\Omega(u)\ll \exp(-u^{4 })$ as $|u|\to \infty$,
and in particular is non-Gaussian.


Our goal here is to treat the problem of counting {\em prime} lattice points in the dilated domain $R\Omega$, seeking to address the analogue of the above properties of the lattice point count $N_\Omega(R)$.

\subsection{Prime lattice points}
We consider a convex domain $\Omega$, which we assume is symmetric about the coordinate axes, that is under the reflections $(x,y)\mapsto (\pm x, \pm y)$ (this is natural if we want to count primes, which do not come with a definite sign).
We further assume that $\Omega$ is an {\em oval}, meaning convex with smooth boundary $\partial \Omega$ having nowhere zero curvature. This latter assumption is made as a convenient working hypothesis, and there are interesting variants of the problem which do not satisfy this assumption.

Let
$$\pi_\Omega(R) := \#\Big\{(p,q)\in R\Omega\cap \Z^2: |p|,|q| \quad {\rm prime} \Big\}$$
be the number of lattice points in the dilated domain $R\Omega$ with both coordinates being prime. Also let
$$
\psi_\Omega(R) := \sum_{(m,n)\in R\Omega\cap \Z^2} \Lambda(|m|)\Lambda(|n|)
$$
where $\Lambda(n)$
is  the von Mangoldt function,  which equals $\log p$ if $n=p^k$ is a power of a prime $p$ ($k\geq 1$), and is zero otherwise, and
the sum is over all lattice points lying in the dilated domain $R\Omega$, whose coordinates are both prime powers. To start our investigation, we give a prime number theorem for lattice points in $R\Omega$:
\begin{theorem} \label{PNT for omega}
Assume that $\Omega$ is a symmetric oval as above. Then
$$ \pi_\Omega(R) \sim \area(\Omega) \frac{R^2}{(\log R)^2} \quad {  and} \quad  \psi_\Omega(R) \sim \area(\Omega)R^2,
\quad {  as} \quad R\to \infty .
$$
Assuming the Riemann Hypothesis (RH), we have
$$
\psi_\Omega(R) = \area(\Omega)R^2+O(R^{3/2}) .
$$
\end{theorem}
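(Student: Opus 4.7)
The plan is to reduce the two-dimensional sum to a one-dimensional one using the reflection symmetries of $\Omega$, and then analyze it via PNT (unconditionally) or the truncated explicit formula (under RH). Since $\Omega$ is symmetric under $(x, y) \mapsto (\pm x, \pm y)$ and $\Lambda$ vanishes at $0$, writing $\Omega \cap [0, \infty)^2 = \{(x, y): 0 \leq x \leq a,\ 0 \leq y \leq f(x)\}$ for a concave function $f: [0, a] \to [0, b]$, one obtains
\[
\psi_\Omega(R) \;=\; 4 \sum_{1 \leq m \leq aR} \Lambda(m)\, \psi\bigl(R f(m/R)\bigr),
\]
where $\psi(y) = \sum_{n \leq y} \Lambda(n)$. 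Two geometric features of $f$ will be crucial later: the symmetry $x \mapsto -x$ forces $f'(0) = 0$, while the nonvanishing curvature at the boundary point $(a, 0)$ (where the tangent to $\partial\Omega$ is vertical) forces $f(v) \sim c\sqrt{a - v}$ as $v \to a^-$.

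For the unconditional asymptotic, I apply PNT in the form $\psi(y) = y + o(y)$ to each summand, reducing the problem to showing $\sum_{m \leq aR} \Lambda(m) f(m/R) \sim R \int_0^a f(v)\,dv = R\cdot \area(\Omega)/4$, which follows from a second application of PNT via Abel summation (the boundary contribution vanishing thanks to $f(a) = 0$). This gives $\psi_\Omega(R) \sim \area(\Omega) R^2$. The asymptotic for $\pi_\Omega$ follows by partial summation, after observing that prime powers $p^k$ with $k \geq 2$ contribute only $O(R(\log R)^2)$ to $\psi_\Omega$.

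For the RH bound, substitute the truncated explicit formula $\psi(y) = y - \sum_{|\gamma| \leq T} y^{1/2+i\gamma}/(\tfrac12 + i\gamma) + O(y \log^2(yT)/T)$ into the one-dimensional sum, and separately rewrite $\sum_m \Lambda(m) f(m/R) - R\int_0^a f$ via Abel summation as $-R^{-1} \int_0^{aR} (\psi(u) - u)\, f'(u/R)\, du$, which is then also expanded via the explicit formula. After exchanging summations and a further integration by parts in which $f(a) = 0$ kills the boundary term, the leading contribution to $\psi_\Omega(R) - \area(\Omega)R^2$ becomes a sum of oscillatory terms of the form
\[
\frac{R^{3/2+i\gamma}}{\tfrac12+i\gamma}\left\{\int_0^a f(v)^{1/2+i\gamma}\,dv \;-\; (\tfrac12+i\gamma)\int_0^a v^{-1/2+i\gamma} f(v)\,dv\right\}.
\]
Stationary phase applied at the symmetry-induced boundary critical point $v = 0$ of $\log f$ shows $\int_0^a f(v)^{1/2+i\gamma}\,dv = O(|\gamma|^{-1/2})$, while the second integral, whose amplitude inherits a $\sqrt{a - v}$ cusp, decays as $O(|\gamma|^{-3/2})$ under integration by parts (viewed as a Fourier transform in $w = \log v$). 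Combined with the Riemann--von Mangoldt density $N(T + 1) - N(T) = O(\log T)$, the sum over zeros is then dominated by $\sum_\gamma |\gamma|^{-3/2}$, which converges; choosing $T$ (say $T \asymp R$) to absorb the truncation error yields the $O(R^{3/2})$ bound.

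The main obstacle will be controlling the nested Abel-summation error that arises when approximating the inner sum $\sum_m \Lambda(m) f(m/R)^{1/2+i\gamma}$ by its integral analog $R\int_0^a f(v)^{1/2+i\gamma}\,dv$: naive bounds pick up a factor $|\gamma|$ from differentiating $f(v)^{1/2+i\gamma}$, which destroys the convergence of the $\gamma$-sum and would only yield the weaker estimate $O(R^{3/2}(\log R)^2)$ that one gets by inserting the pointwise bound $\psi(y) - y = O(\sqrt{y}\log^2 y)$. Removing the logarithms requires treating the error by a second iteration of the explicit formula and carefully exploiting cancellation between the resulting zero-sums, together with the integrability afforded by the two geometric features $f'(0) = 0$ and $f(v) \sim c\sqrt{a - v}$ near the endpoints.
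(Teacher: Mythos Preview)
Your proposal follows essentially the same route as the paper: reduce to the positive quadrant by symmetry, apply the truncated explicit formula to the inner sum, then again (via Abel summation) to the outer sum, and isolate the single-zero Mellin integrals $\mathcal I_1(\rho)=\int_0^a f(v)v^{\rho-1}\,dv$ and $\mathcal I_2(\rho)=\int_0^b g(u)u^{\rho-1}\,du=\rho^{-1}\int_0^a f(v)^{\rho}\,dv$, showing these are $O(|\rho|^{-3/2})$ from the square-root behaviour of $f,g$ at the vertices.

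The step you flag as ``the main obstacle'' is indeed where the real work lies, and your phrase ``carefully exploiting cancellation between the resulting zero-sums'' has to be made concrete. After the second explicit formula one gets the double sum $\mathcal S=\sum_{|\gamma|\le T}\sum_{|\gamma'|\le T'}\rho^{-1}R^{\rho+\rho'}\int_0^a f(v)^{\rho}v^{\rho'-1}\,dv$; the paper handles it by one integration by parts and then Cauchy--Schwarz in $v$ against the weight $|f'(v)|f(v)^{-1/2}$ (whose integrability does use $f(v)\sim c\sqrt{a-v}$), bounding one factor trivially by $(\log T')^{2}$ and reducing the other to $\sum_{|\gamma|,|\gamma'|\le T}(1+|\gamma-\gamma'|)^{-1}\ll T(\log T)^{3}$. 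This gives $\mathcal S\ll RT^{1/2}(\log R)^{7/2}$, so the correct truncation is $T\asymp R^{2/3}$ (not $T\asymp R$ as you suggest), yielding a secondary error $O(R^{4/3+o(1)})$ safely below $R^{3/2}$. Two minor slips: the contribution of higher prime powers to $\psi_\Omega$ is $O(R^{3/2})$, not $O(R(\log R)^2)$; and deducing $\pi_\Omega$ from $\psi_\Omega$ is not literally partial summation but the observation that almost all the mass in $\psi_\Omega$ comes from pairs of primes in $[R/(\log R)^{C},R]$, on which $\log m\log n\sim(\log R)^2$.
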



Our main goal is to study the distribution of the normalized remainder term
$$
H_\Omega(R) = \frac{\psi_\Omega(R)-\area(\Omega)R^2}{R^{3/2}} .
$$
The appropriate scale to use is logarithmic:  We show that assuming RH, there is a probability measure
$\dd\mu_\Omega$, supported in $[-A,A]$ (where $A=\sup |H_\Omega|$) so that for any bounded continuous function $G$,
$$
\lim_{X\to \infty} \frac 1{\log X}\int_1^X G\Big( H_\Omega(R)\Big) \;\frac{\dd R}{R}  = \intinf G(u) \dd\mu_\Omega(u) .
$$

To proceed further, we need the Linear Independence Hypothesis (LI) for the zeros of the Riemann zeta function. Recall that the Riemann Hypothesis is the statement that the nontrivial zeros of the Riemann zeta function are of the form $\rho=\tfrac 12 +i\gamma$, with $\gamma$ real. Due to the functional equation of the Riemann zeta function, the zeros come in conjugate pairs, so that if $\rho=\tfrac 12 +i\gamma$ is a zero, then so is $\bar\rho=\tfrac 12 -i\gamma$. In addition to assuming RH, we assume:
\begin{customthm}{LI}\label{LI}
The imaginary parts of all nontrivial zeros $\rho=\tfrac 12 +i\gamma$  with $\gamma>0$, are   linearly independent   over the rationals.
\end{customthm}
This hypothesis was  used by Wintner \cite{Wintner 1941}, and extensively since, for instance in the study of prime number races \cite{RubSar}. While plausible, it seems unlikely to be provable in the foreseeable future. See \cite[Table 2]{BT} for numerical checks that the first few zeros do not satisfy any linear relations with small coefficients, for instance that the first $500$ zeros do not admit any nontrivial linear relations with coefficients of size at most $10^5$.

From general properties of the value distribution of uniformly almost periodic functions with linearly independent frequencies \cite{WintnerDioph}, we deduce that
\begin{theorem}\label{thm random}
 Assume  the Linear Independence Hypothesis. Then $\dd\mu_\Omega(u) = p_\Omega(u)\dd u$ is absolutely continuous, with a smooth density $p_\Omega$, which is {\em symmetric}: $p_\Omega(-u) = p_\Omega(u)$.
It is the probability distribution function of the  random function
$$
g_\Omega(\vec x) =\sum_{n=1}^\infty A_n \cos(x_n)
$$
where $\vec x=(x_1,x_2,\dots)$ are independent random variables, uniformly distributed in $[0,\pi]$,
$$ A_n=\betaomega(\gamma_n)$$
where $\{\gamma_n:n=1,2,\dots\}$ are the imaginary parts of the nontrivial Riemann zeros ($\gamma_n>0$), and $\betaomega$ is a certain   function depending on the domain (see \eqref{def of BOmega}), satisfying
$$
\betaomega(\gamma)\ll \gamma^{-3/2},\quad \gamma \to \infty,
$$
and is nonzero infinitely often.
\end{theorem}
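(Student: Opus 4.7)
The plan is to derive an explicit formula expressing $H_\Omega(R)$ as an almost periodic function of $\log R$ with spectrum $\{\gamma_n\}$ and amplitudes $\betaomega(\gamma_n)$, and then invoke the classical theory of Wintner \cite{WintnerDioph} on the value distribution of almost periodic functions with rationally independent frequencies.

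First I would derive an explicit formula for $\psi_\Omega(R)$. Starting from the double sum definition and applying Mellin inversion (either in two variables, or by reducing to a contour integral over vertical slices of $R\Omega$), one shifts past the pole of $-\zeta'/\zeta$ at $s=1$, which reproduces the main term $\area(\Omega)R^2$ from Theorem \ref{PNT for omega}, and past each nontrivial zero $\rho = \tfrac12 + i\gamma$. The geometric factor attached to each zero is the function $\betaomega(\gamma)$ of \eqref{def of BOmega}, essentially the Fourier transform of $\mathbf{1}_\Omega$ restricted to the critical line. Invoking RH, dividing by $R^{3/2}$, and pairing conjugate zeros yields, up to a negligible contribution from trivial zeros and lower-order arithmetic terms,
$$H_\Omega(R) = \sum_{\gamma_n > 0} \betaomega(\gamma_n)\cos(\gamma_n \log R + \phi_n) + o(1),$$
where the axis symmetry of $\Omega$ makes $\betaomega$ real, so that each $\phi_n\in\{0,\pi\}$ can be absorbed into a sign.

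The two properties of $\betaomega$ are established by stationary phase on $\partial\Omega$. The smoothness and nowhere-vanishing curvature hypothesis places $\betaomega(\gamma)$ in the standard class of oscillatory integrals over strictly convex curves, producing both the decay bound $\betaomega(\gamma) \ll \gamma^{-3/2}$ and an explicit asymptotic expansion. The leading terms in that expansion form a superposition of oscillatory exponentials whose phases are determined by the support function of $\Omega$ at the four axial tangency points; these phases are incommensurable with one another, so the leading piece cannot vanish identically, giving $\betaomega(\gamma)\neq 0$ for infinitely many $\gamma$.

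With the explicit formula in hand, the probabilistic conclusions follow from general theory. Setting $t=\log R$ turns $\dd R/R$ into Lebesgue measure on $[0,\log X]$, and under LI Weyl's equidistribution theorem shows that the flow $t\mapsto(\gamma_n t \bmod 2\pi)_n$ on $\TT^\infty$ is uniformly distributed with respect to Haar measure. The limiting law of $H_\Omega(R)$ is therefore the law of $g_\Omega(\vec x)$ with $x_n$ independent uniform on $[0,\pi]$. The characteristic function of $g_\Omega$ factorises as $\prod_n J_0(A_n \xi)$ (each $A_n\cos(x_n)$ having Fourier transform $J_0(A_n\xi)$), and the decay $A_n \ll \gamma_n^{-3/2}$ combined with the density $\gamma_n \sim 2\pi n/\log n$ makes this product decay faster than any polynomial in $\xi$, so $p_\Omega \in C^\infty$ by Fourier inversion; the symmetry $p_\Omega(-u) = p_\Omega(u)$ follows from invariance of the law of $g_\Omega$ under $x_n\mapsto\pi - x_n$. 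The main obstacle will be making the explicit formula rigorous as a pointwise (or uniform-in-$R$) identity rather than as a merely averaged one: this will require a smoothing of $\psi_\Omega$, a truncation of the zero sum at some height $T=T(R)$, and tail bounds uniform in $R$, followed by a desmoothing that does not destroy the identification of the limiting distribution.
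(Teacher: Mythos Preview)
Your overall architecture matches the paper's: derive an explicit formula that expresses $H_\Omega(R)$ (up to $o(1)$) as an absolutely convergent series $\sum_{\gamma>0}\betaomega(\gamma)\cos(\gamma\log R+\varphi_\gamma)$, then invoke Wintner's theory under LI to get a smooth symmetric density with characteristic function $\prod_n J_0(A_n\xi)$. Three points, however, need correction.

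\textbf{Phases.} The claim that axis symmetry makes the amplitudes real with $\phi_n\in\{0,\pi\}$ is false. The quantity attached to a zero is $\oscI_1(\rho)+\oscI_2(\rho)$, the Mellin transforms of $f$ and $g=f^{-1}$ at $\rho=\tfrac12+i\gamma$; these are genuinely complex (their leading terms carry $a^{i\gamma}/\rho^{3/2}$ and $b^{i\gamma}/\rho^{3/2}$). In \eqref{def of BOmega} the paper takes $\betaomega(\gamma)=8|\oscI_1(\rho)+\oscI_2(\rho)|$ and records a separate phase $\varphi_\gamma$. This error is harmless for the conclusion, since under LI the phases are absorbed by equidistribution on the infinite torus, but you should not claim it.

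\textbf{Decay and nonvanishing.} The bound $\betaomega(\gamma)\ll\gamma^{-3/2}$ is not obtained by stationary phase on $\partial\Omega$: the phase $\gamma\log x$ in $\int_0^a f(x)x^{-1/2+i\gamma}\dd x$ has no interior critical point, and the asymptotics come from the \emph{endpoint} $x=a$, where nonvanishing curvature forces $f(x)\sim\sqrt{2/\kappa(a,0)}\sqrt{a-x}$. Subtracting this square-root profile and integrating by parts twice yields $\oscI_1(\rho)=\sqrt{\pi/(2\kappa(a,0))}\,a^{1/2+\rho}\rho^{-3/2}+O(|\rho|^{-2})$ (Lemma~\ref{lem:asymp tilde f}). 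For nonvanishing, your ``incommensurable phases'' sketch is insufficient: from the asymptotic one gets $|\oscI_1+\oscI_2|\sim C|\rho|^{-3/2}\bigl|e^{i\gamma\log(b/a)}+c\bigr|$, and when $a\neq b$ the paper needs Hlawka's theorem that $\{\alpha\gamma_n\}$ is equidistributed mod~$1$ for $\alpha\neq0$ to guarantee $\Re e^{i\gamma\log(b/a)}>\tfrac12$ for a positive proportion of zeros. Some genuine input about the $\gamma_n$ is unavoidable here.

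\textbf{Explicit formula.} The paper does not run a two-variable Mellin inversion. It applies the truncated one-dimensional explicit formula to the inner sum $\sum_{n\leq Rf(m/R)}\Lambda(n)$, then uses partial summation and the explicit formula again on the outer sum over $m$. The resulting cross term (a double sum over pairs $(\rho,\rho')$ of zeros) is handled by Cauchy--Schwarz and the zero-counting bound $\sum_{|\gamma|,|\gamma'|\leq T}(1+|\gamma-\gamma'|)^{-1}\ll T(\log T)^3$, giving error $O(R^{4/3}(\log R)^{7/2})$. No smoothing/desmoothing is needed: once $\betaomega(\gamma)\ll\gamma^{-3/2}$ is known, the zero-sum converges absolutely and the tail beyond the truncation height is already negligible.
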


Theorem~\ref{thm random} allows us to use a formula for the limiting distribution of a sum of sine waves with random phases
  to deduce
\begin{equation}\label{Flowers}
p_\Omega(u) = \frac 1{2A} +\frac 1A\sum_{k=1}^\infty \Big(\prod_{n=1}^\infty J_0\big(\frac{\pi k A_n}A\big) \Big) \cos\Big(\frac{\pi k u}A\Big), \qquad |u|<A
\end{equation}
where $A=\sum_n A_n$.  See Figure~\ref{fig ellipse vs circle} for the value distribution for the circle and for an ellipse, by using \eqref{Flowers} with 500 zeros.

  \begin{figure}[!htb]
\begin{center}
  \includegraphics[width=80mm]
{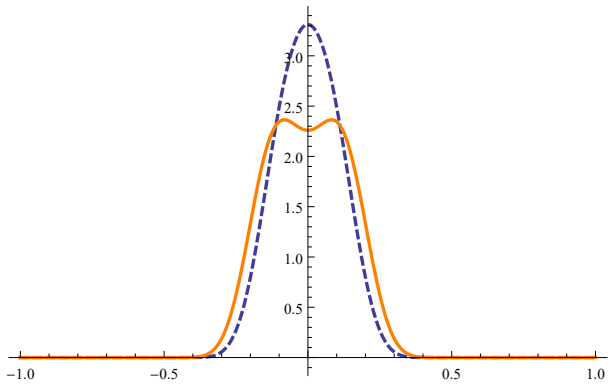}
 \caption{ The value distribution function $p_\Omega(u)$    for the   circle $x^2+y^2\leq 1$ (dashed) and the ellipse $(x/a)^2+(y/b)^2\leq 1$ (solid) with $a=1$, $b=0.65$. The plots have been rescaled, replacing $p_\Omega(u)$ by $Ap_\Omega(Au)$. Note that for this particular ellipse, the distribution is bimodal. }
 \label{fig ellipse vs circle}
\end{center}
\end{figure}

According to Theorem~\ref{thm random},   the value distribution function $p_\Omega$ is {\em symmetric}: $p_\Omega(-u) = p_\Omega(u)$. Note that for the corresponding problem of counting all lattice points,  the distribution need not be symmetric, for instance for the circle, the third moment is negative \cite{Tsang}.

As mentioned earlier, the assumption that $\Omega$ has smooth boundary, with nowhere zero curvature, is made to get a simple set of examples. There are other natural cases one can consider, for instance when $\Omega$ is the   triangle $T  =\{x+y\leq 1, x,y>0\}$. Then $\psi_T(R)=\sum_{n=1}^R r(n)$
where $r(k) = \sum_{m+n=k}\Lambda(m)\Lambda(n)$,   so that $\psi_T(R)/R$
is related to the average number of representations of an integer as a sum of two primes. In this guise, the value distribution of  $(\psi_T(R)-\tfrac 12R^2)/R^{3/2}$  was studied by Fujii \cite{Fujii}.
 See Figure~\ref{overlaid_goldbach} for a plot of the corresponding value distribution function $p_T$, by using \eqref{Flowers} with 500 zeros.
\begin{figure}[!htb]
\begin{center}
  \includegraphics[width=80mm]
{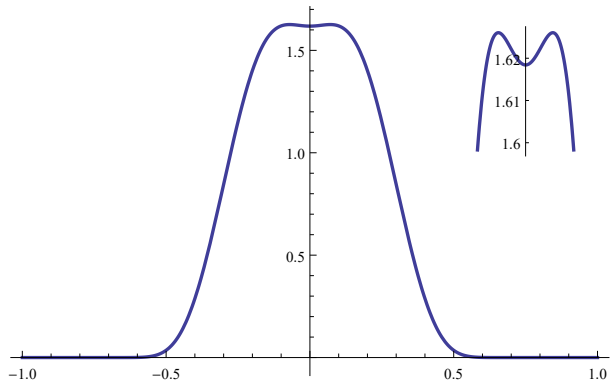}
 \caption{ The value distribution function $p_T$ for the triangle $T=\{x+y\leq 1,x,y\geq 0\}$. The inset displays the bimodal nature of the distribution.}
 \label{overlaid_goldbach}
\end{center}
\end{figure}

\subsection{The one-dimensional case}

It is instructive to compare our findings on the remainder term for ovals in dimension two with the one-dimensional case, where we take a symmetric interval $\Omega=[-1,1]$, and then
$$\psi_{\Omega}(R)=2\psi(R)=2\sum_{n\leq R} \Lambda(n)$$
and we are simply studying the remainder term in the Prime Number Theorem. In that case, Littlewood showed \cite{Littlewood 1914}  (assuming RH) that the normalized remainder term
$(\psi(R)-R)/R^{1/2}$
is {\em unbounded}, unlike what we find in the case of 2-dimensional ovals.   Wintner \cite{Wintner 1941} proved the existence of a limiting distribution $p(u)$ (assuming RH), which is not compactly supported.
In comparison,   for our symmetric ovals, the normalized remainder term  $(\psi_\Omega(R)-\area(\Omega)R^2)/R^{3/2}$ is {\em bounded}, so that the limiting distribution
$p_\Omega$ is compactly supported.

\bigskip

\noindent{\bf Acknowledgements:}
We thank Steve Lester and the reviewer for their comments.
The work was supported  by the European Research Council, under the European Union's Seventh
Framework Programme (FP7/2007-2013)/ERC grant agreement n$^{\text{o}}$~320755.

\section{Symmetric ovals}

\subsection{Geometric preliminaries}
We take a planar domain  $\Omega$ to be an {\em oval}, that is bounded by a smooth, convex curve, which has nowhere-vanishing curvature. We further assume that $\Omega$ is \emph{symmetric} with respect to reflections in the coordinate axes $(x,y)\mapsto (\pm x,\pm y)$, so it necessarily contains the origin.
We may then display the top half of the boundary as the graph of a function:
$$\partial\Omega\cap \{y>0\} = \Big\{ (x,f(x)) : |x|\leq a\Big\}$$
where $f(x)$ is an even function (to take into account the reflection symmetry in the $y$-axis), which is smooth, $f(a)=0$, $f(x)$ is monotonically decreasing for $x>0$ (to allow convexity),  and $f''(x)<0$ to give the nowhere vanishing curvature condition, since the curvature of $\Omega$ at $(x,f(x))$ is
$$\kappa(x,f(x)) = -\frac{f''(x)}{(1+f'(x)^2)^{3/2}}, \qquad |x|<a .
$$
Likewise, we may display the right half of the boundary as a graph:
$$\partial\Omega\cap \{x>0\} = \Big\{ (g(y),y) : |y|\leq b\Big\}$$
with
$$ g=f^{-1}$$
the inverse function to $f$.

For instance, if $\Omega$ is the ellipse $(x/a)^2+(y/b)^2\leq 1$, then we take $f(x)=b\sqrt{1-(x/a)^2}$, $|x|\leq a$, and $g(y) = a\sqrt{1-(y/b)^2}$, $|y|\leq b$.

Other   examples are {\em Cassini ovals}, which are the locus of points such that the product of their distances from two fixed points a distance $2\alpha$ apart is a constant  $\beta^2$. In cartesian coordinates, if we locate the two points on the $x$-axis at $(\pm \alpha,0)$, then the equation of the boundary curve is
\begin{equation}\label{cassini}
 \Big( (x-\alpha)^2+y^2\Big) \Big( (x+\alpha)^2+y^2 \Big)=\beta^4,
\end{equation}
which intersects the $x$-axis at $\pm \sqrt{\beta^2+\alpha^2}$, and the $y$-axis at $\pm\sqrt{\beta^2-\alpha^2}$ (assuming $\beta>\alpha$) .
If $\beta>\sqrt{2}\alpha$ then we get an oval,   if $\alpha<\beta<\sqrt{2}\alpha$ then we get a non-convex curve (a ``dog-bone"),
see Figure~\ref{fig:cassinis}, while for $0<\beta<\alpha$ we get two disconnected curves. For the Cassini oval \eqref{cassini} with $\beta>\sqrt{2}\alpha$, we take
$$f(x) =\sqrt{\sqrt{4 \alpha^2 x^2+\beta^4}-\alpha^2-x^2} \;, \quad |x|\leq \sqrt{\beta^2+\alpha^2} $$
and
$$g(y) = \sqrt{\alpha ^2-y^2+\sqrt{\beta ^4-4 \alpha ^2 y^2}}
\;,\quad |y|\leq \sqrt{\beta^2-\alpha^2}.$$
 \begin{figure}[!htb]
\begin{center}
  \includegraphics[width=80mm]
  {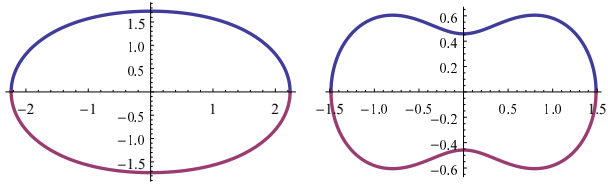}
 \caption{The Cassini ovals $  (  (x-\alpha)^2+y^2 ) ( (x+\alpha)^2+y^2 )=\beta^4  $
with $\alpha=1$, and $\beta=2$ (LHS) and $\beta=1.1$ (RHS).}
 \label{fig:cassinis}
\end{center}
\end{figure}

\subsection{Singularities at the vertices}
 We note that for symmetric ovals, the intersection points  $\{(\pm a,0),(0,\pm b)\}$ of $\partial \Omega$ with the coordinate axes are {\em vertices}, that is local extrema of the curvature.
We will need to know the nature of the singularities of $f(x)$ as $x\nearrow  a$ and of the inverse function $g(y)=f^{-1}(y)$ as $y\nearrow b$:

\begin{lemma}\label{lem:sing}
Let $\kappa(x,y)$ be the curvature at a point $(x,y)\in \partial \Omega$ of the boundary. Then
\begin{equation}\label{sing of f}
f(x) = \sqrt{\frac{2}{\kappa(a,0)} } \cdot  \sqrt{a-x} \cdot \Big( 1+O(a-x) \Big), \quad {\rm as }\; x\nearrow a
\end{equation}
and
\begin{equation}\label{sing of f^{-1}}
g(y) =\sqrt{\frac{2}{\kappa(0,b)} } \cdot  \sqrt{b-y} \cdot \Big(1+O(b-y) \Big), \quad {\rm as }\; y\nearrow b.
\end{equation}
\end{lemma}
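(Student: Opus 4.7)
The plan is to exploit the symmetry $(x,y)\mapsto(x,-y)$ to parametrize the boundary near the vertex $(a,0)$ by $y$ rather than $x$, because $f(x)$ itself is singular at $x=a$ (the tangent to $\partial\Omega$ at $(a,0)$ is vertical, so $f'(x)\to-\infty$), whereas $g=f^{-1}$ extends smoothly through $y=0$. First, I would observe that the horizontal symmetry forces $g$ to be an even smooth function of $y$ on a neighborhood of $0$, with $g(0)=a$, $g'(0)=0$, and (by convexity and nowhere-vanishing curvature) $g''(0)<0$.

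Next, applying the curvature formula to the parametrization $y\mapsto(g(y),y)$ at $y=0$, where $g'(0)=0$, I get
\[
\kappa(a,0) \;=\; -\,\frac{g''(0)}{(1+g'(0)^2)^{3/2}} \;=\; -g''(0),
\]
so $g''(0)=-\kappa(a,0)$. Now Taylor-expand $g$ at $0$: because $g$ is even and smooth, only even powers of $y$ appear, giving
\[
g(y) \;=\; a \;-\; \frac{\kappa(a,0)}{2}\,y^2 \;+\; O(y^4),
\]
equivalently
\[
a-g(y) \;=\; \frac{\kappa(a,0)}{2}\,y^2\bigl(1+O(y^2)\bigr).
\]

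The final step is to invert this. Setting $x=g(y)$, so that $f(x)=y$, and writing $u=a-x$, we have $u=\tfrac{1}{2}\kappa(a,0)\,y^2\bigl(1+O(y^2)\bigr)$, which in particular yields the a priori bound $y^2=O(u)$. Substituting this back to replace $O(y^2)$ by $O(u)$ gives
\[
y^2 \;=\; \frac{2u}{\kappa(a,0)}\bigl(1+O(u)\bigr),
\]
and taking square roots (using $\sqrt{1+O(u)}=1+O(u)$) produces exactly \eqref{sing of f}. The companion statement \eqref{sing of f^{-1}} for $g$ near $y=b$ follows by the completely analogous argument, now using the reflection symmetry $(x,y)\mapsto(-x,y)$ to parametrize $\partial\Omega$ near $(0,b)$ as $x\mapsto(x,f(x))$ with $f$ even in $x$ at $x=0$, swapping the roles of the two coordinates throughout.

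There is no real obstacle here; the only point worth flagging is that one must resist the temptation to Taylor-expand $f$ directly, since $f$ is not smooth at $x=a$. The $O(a-x)$ error (rather than the weaker $O(\sqrt{a-x})$) is entirely a consequence of the evenness of $g$ in $y$, which kills the would-be $y^3$ term in its expansion — so the symmetry hypothesis on $\Omega$ is used in an essential way.
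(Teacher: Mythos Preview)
Your proof is correct and follows essentially the same approach as the paper: parametrize near $(a,0)$ by the smooth even function $g=f^{-1}$, identify $g''(0)=-\kappa(a,0)$ via the curvature formula, Taylor-expand $g$ to fourth order using evenness, and invert. Your inversion step is spelled out a bit more carefully (the bootstrap $y^2=O(u)$ before substituting back), and your closing remark about why symmetry is essential for the sharper $O(a-x)$ error is a nice addition, but the argument is the same.
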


\begin{proof}
We write $x=g(y)$ for $y\searrow 0$, when $x\nearrow a$, and expand $g(y)$ in a Taylor series around $y=0$
$$x=g(y) = g(0) + g'(0)y+\frac 12 g''(0)y^2 + \frac 1{3!}g^{(3)}(0)y^3 +O(y^4).$$
We use $g(0)=a$ and   the vanishing of the odd derivatives  at $0$ since $g$ is even: $g'(0)=0= g^{(3)}(0)$, and obtain
$$
x=a + \frac 12 g''(0)y^2 +O(y^4)
$$
or
$$
y=f(x)=\sqrt{\frac{-2}{g''(0)}}\sqrt{a-x}\Big( 1+O(x-a)\Big).
$$
Now we recall that the curvature of a graph $(g(y),y)$ is given by
$$\kappa(g(y),y) = -\frac{g''(y)}{(1+g'(y)^2)^{3/2}}$$
and at $y=0$ this reduces to
\begin{equation}\label{g''=kappa}
\kappa(a,0) = -g''(0).
\end{equation}
Hence we have found
$$
f(x)=\sqrt{\frac{2}{\kappa(a,0)}}\sqrt{a-x}\Big( 1+O(a-x)\Big),\quad x\nearrow a
$$
giving \eqref{sing of f}. The argument for \eqref{sing of f^{-1}}  is identical.
\end{proof}

\begin{lemma}\label{lem:sing2}
Let
$$A(x):=f(x)- \sqrt{\frac{2}{\kappa(a,0)}(a-x) },\qquad B(y):= g(y)-\sqrt{\frac{2}{\kappa(0,b)}(b-y) }   .
$$
Then the derivative of $A$ satisfies  $A'(x)=O(\sqrt{a-x})$ as $x\nearrow a$, and in particular $A'(a)=0$.
The second derivative of $A$ satisfies
$$A''(x) = O\Big(\frac{1}{\sqrt{a-x}}\Big), \quad {\rm as }\; x\nearrow a
$$
and in particular $A''$ is integrable on $(0,a)$. Likewise,  $B'(b)=0$ and $B''$ is   integrable on $(0,b)$.
\end{lemma}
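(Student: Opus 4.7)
The plan is to sharpen the Taylor analysis carried out in Lemma~\ref{lem:sing}. The key observation is that since $g$ is smooth and \emph{even} on $(-b,b)$, its Taylor series at $y=0$ contains only even powers of $y$; I would express this as $g(y)=a-G(y^{2})$ for a smooth function $G$ with $G(0)=0$ and $G'(0)=-g''(0)/2=\kappa(a,0)/2>0$, the last equality from \eqref{g''=kappa}.

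Setting $y=f(x)$ and $t:=a-x$, the relation $x=g(y)$ reads $t=G(f(x)^{2})$. Since $G'(0)\ne 0$, the inverse function theorem gives $f(x)^{2}=G^{-1}(t)$ as a smooth function of $t$ alone near $0$, with value $0$ and derivative $2/\kappa(a,0)$ there. Pulling out the leading factor $\sqrt{t}$, I would write
$$
f(x)=\sqrt{t}\,\phi(t),\qquad t=a-x,
$$
with $\phi$ smooth in $t$ near $0$ and $\phi(0)=\sqrt{2/\kappa(a,0)}$. Consequently
$$
A(x)=\sqrt{t}\bigl(\phi(t)-\phi(0)\bigr)=t^{3/2}\,\widetilde{\phi}(t),
$$
where $\widetilde{\phi}(t):=(\phi(t)-\phi(0))/t$ is smooth near $0$.

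Differentiating $A(x)=(a-x)^{3/2}\widetilde{\phi}(a-x)$ in $x$ once, the chain rule yields $A'(x)=O(\sqrt{a-x})$, so in particular $A'(a)=0$; a second differentiation produces a leading term of size $(a-x)^{-1/2}$ and two lower-order terms, giving $A''(x)=O((a-x)^{-1/2})$, which is integrable on $(0,a)$ since $\int_{0}^{a}(a-x)^{-1/2}\,dx<\infty$. The bounds for $B$ follow by the identical argument with the roles of $x,y$ and $a,b$ interchanged.

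The step I expect to require the most care is justifying that $f(x)^{2}$ is smooth as a function of $t=a-x$ (and not merely of $\sqrt{t}$); this is where the symmetry hypothesis is essential. Without evenness of $g$, the implicit solution for $f(x)^{2}$ would pick up odd powers of $\sqrt{t}$, leaving only $A(x)=O(a-x)$, too weak to make $A''$ integrable. Once smooth dependence on $t$ is secured, the remaining estimates are a routine chain-rule computation.
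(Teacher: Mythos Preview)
Your argument is correct and takes a genuinely different route from the paper. The paper proceeds by direct computation: it writes $f'(x)=1/g'(y)$ and $f''(x)=-g''(y)/g'(y)^{3}$ via the inverse-function relation, Taylor-expands $g'$ and $g''$ about $y=0$ (using evenness to kill the odd-order terms), substitutes the known leading behaviour $y\sim\sqrt{2(a-x)/\kappa(a,0)}$, and then subtracts off the derivatives of $\sqrt{2(a-x)/\kappa(a,0)}$ term by term to isolate $A'$ and $A''$. Your approach is more structural: you package the evenness of $g$ as $g(y)=a-G(y^{2})$ with $G$ smooth, invert to obtain $f(x)^{2}=G^{-1}(a-x)$ as a smooth function of $t=a-x$, and deduce the factorisation $A(x)=t^{3/2}\widetilde{\phi}(t)$ with $\widetilde{\phi}$ smooth; the required bounds (indeed, bounds on \emph{all} higher derivatives of $A$) then drop out by the chain rule. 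The one nontrivial ingredient you invoke implicitly is that a smooth even function of $y$ can be written as a smooth function of $y^{2}$; this is a standard fact (the one-variable case of Whitney's theorem on even functions), but it is worth flagging explicitly since it is what makes $G$, and hence $\widetilde{\phi}$, genuinely $C^{\infty}$ rather than merely possessing a formal Taylor series. The paper's hands-on computation avoids appealing to this lemma at the price of treating each derivative separately; your argument is cleaner and yields more, but leans on that extra input.
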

\begin{proof}
From $x=g(y)$ we have $ 1=g'(y)y'$ or
\begin{equation}\label{deriv of g}
 y'=\frac 1{g'(y)}
\end{equation}
Hence the second derivative of $y=f(x)$ is given by
$$ y'' = (\frac 1{g'(y)})' = -\frac {g''(y)y'}{(g'(y))^2} = -\frac {g''(y) }{(g'(y))^3}
$$
after inserting \eqref{deriv of g}.

Expanding about $y=0$, and recalling that since $g$ is even, all the odd derivatives vanish at $y=0$, we obtain
$$ g''(y) = g''(0) + O(y^2)=-\kappa(a,0)\Big(1+O(y^2)\Big)$$
after using \eqref{g''=kappa}, and
$$ g'(y) = 0+g''(0)y+O(y^3) = -\kappa(a,0)y\Big(1+O(y^2)\Big).
$$
Hence
$$
y' = \frac 1{g'(y)}  = -\frac 1{\kappa(a,0)y}\Big(1+O(y^2)\Big) .
$$
Inserting $y=\sqrt{ \frac{2}{\kappa(a,0)}(a-x)}(1+O(a-x))$ we obtain
$$f'(x) = y' =- \frac 1{\sqrt{2\kappa(a,0)(a-x)}} + O\Big(\sqrt{a-x}\Big),  \quad x\nearrow a
$$
and so
$$ A'(x) = f'(x)-\Big( \sqrt{\frac 2{\kappa(a,0)}(a-x)} \Big)'  = O\Big(\sqrt{a-x} \Big)$$
and in particular, $A'(a)=0$.

Similarly
$$
y'' =  -\frac {g''(y) }{(g'(y))^3}= -\frac {- \kappa(a,0)(1+O(y^2))}{( -\kappa(a,0)y)^3(1+O(y^2))} =- \frac 1{\kappa(a,0)^2 y^3} +O\Big(\frac 1{y}\Big) .
$$
and so
$$
f''(x) = -\frac 1{\sqrt{8\kappa(a,0)}}\frac 1{(a-x)^{3/2}} +O\Big(\frac 1{\sqrt{a-x}}\Big)
$$
which gives, after identifying the first term as the second derivative of $\sqrt{\frac{2}{\kappa(a,0)} (a-x)}$, that
$$
A''(x) = f''(x) - \Big(\sqrt{\frac{2}{\kappa(a,0)} (a-x)} \;\Big)'' =  O\Big(\frac 1{\sqrt{a-x}}\Big)
$$
as claimed.
\end{proof}

\subsection{An oscillatory integral}
Given a symmetric oval $\Omega$ as above, define
\begin{equation}\label{def osc}
\oscI_1(\rho):=\int_0^a f(x)x^{\rho-1 }\dd x, \qquad  \oscI_2(\rho):= \int_0^b g(y)y^{\rho-1}\dd y
\end{equation}
which are the Mellin transforms of $f$ and $g$.
We want to asymptotically evaluate the oscillatory integrals as $|\Im \rho|\to +\infty$ ($|\Re\rho|\leq 1/2$). The result is
\begin{lemma}\label{lem:asymp tilde f}
Let  $\kappa(x,y)$ be the curvature of the boundary  $\partial \Omega$ at the point $(x,y)$.   Then
$$
\oscI_1(\rho) =\sqrt{\frac{\pi}{2\kappa(a,0)} }\; \frac{a^{\tfrac 12 +\rho}}{\rho^{3/2}}  +O\Big(\frac 1{|\rho|^2}\Big) ,\quad
\oscI_2(\rho) =  \sqrt{\frac{\pi}{2\kappa(0,b)} }\; \frac{b^{\tfrac 12 +\rho }}{\rho^{3/2}}
 + O\Big(\frac 1{|\rho|^2}\Big)
$$
as $|\rho|\to \infty$ ($|\Re\rho|\leq 1/2$).
\end{lemma}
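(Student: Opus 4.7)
The plan is to split $f$ into the singular part at the vertex $(a,0)$ plus a remainder smooth enough to integrate by parts twice, evaluate the Mellin transform of the singular part in closed form, and absorb the remainder into the error.

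Using Lemma~\ref{lem:sing2}, I would write
\[
f(x) = \sqrt{\frac{2}{\kappa(a,0)}}\sqrt{a-x} + A(x), \qquad x\in[0,a],
\]
so that
\[
\oscI_1(\rho) = \sqrt{\frac{2}{\kappa(a,0)}} \int_0^a (a-x)^{1/2} x^{\rho-1}\,\dd x + \int_0^a A(x)\, x^{\rho-1}\,\dd x.
\]
The first integral is a Beta function: after the substitution $x=at$ it equals $a^{\rho+1/2} B(\rho,3/2) = a^{\rho+1/2}\Gamma(\rho)\Gamma(3/2)/\Gamma(\rho+3/2)$. Then Stirling's formula gives $\Gamma(\rho)/\Gamma(\rho+3/2) = \rho^{-3/2}(1+O(|\rho|^{-1}))$ uniformly in the strip $|\Re\rho|\leq 1/2$ as $|\Im\rho|\to\infty$, and $\Gamma(3/2)=\sqrt{\pi}/2$; multiplying through produces the claimed leading term $\sqrt{\pi/(2\kappa(a,0))}\,a^{\rho+1/2}\rho^{-3/2}$ with an error of size $O(|\rho|^{-5/2})$, comfortably within the $O(|\rho|^{-2})$ error.

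For the remainder, the plan is to integrate by parts twice against $x^{\rho-1}$. Using $A(a)=0$ (from the definition), $A'(a)=0$ (from Lemma~\ref{lem:sing2}), and the fact that $x^\rho, x^{\rho+1}\to 0$ at $x=0$ for $\Re\rho>-1$, all boundary terms disappear, leaving
\[
\int_0^a A(x)\, x^{\rho-1}\,\dd x = \frac{1}{\rho(\rho+1)} \int_0^a A''(x)\, x^{\rho+1}\,\dd x.
\]
Since Lemma~\ref{lem:sing2} guarantees that $A''$ is integrable on $(0,a)$ (with only a mild $(a-x)^{-1/2}$ singularity at the endpoint), and since $|x^{\rho+1}|\le a^{\Re\rho+1}$ is bounded on the strip, this whole contribution is $O(|\rho|^{-2})$.

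The treatment of $\oscI_2$ is identical, with $f,a,\kappa(a,0)$ replaced by $g,b,\kappa(0,b)$, using the second half of Lemmas~\ref{lem:sing} and~\ref{lem:sing2}. I don't expect a serious obstacle: the only delicate point is verifying that the vanishing orders of $A$ at the two endpoints are exactly right so that two integrations by parts succeed without boundary contributions, but this is precisely what Lemma~\ref{lem:sing2} was designed to supply.
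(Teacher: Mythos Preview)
Your proposal is correct and follows essentially the same approach as the paper: the same decomposition $f=\sqrt{2/\kappa(a,0)}\sqrt{a-x}+A$, the same Beta-function evaluation of the main term via Stirling, and the same double integration by parts on the remainder using $A(a)=A'(a)=0$ and $A''\in L^{1}(0,a)$. One tiny slip: $x^{\rho}\to 0$ at $x=0$ needs $\Re\rho>0$, not $\Re\rho>-1$; this is harmless since the Mellin integral $\oscI_1(\rho)$ is only absolutely convergent for $\Re\rho>0$ anyway, and in the application $\Re\rho=\tfrac12$.
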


\begin{proof}
 We use  Lemma~\ref{lem:sing2} to write
$$
f(x) =\sqrt{\frac{2}{\kappa(a,0)} } \cdot  \sqrt{a-x}    +A(x)
$$
with $A''\in L^1(0,a)$, and insert this into the integral $\oscI_1$ to obtain
\begin{equation}\label{first expansion of I}
\oscI_1 (\rho)=\sqrt{\frac{2}{\kappa(a,0)} } \int_0^a \sqrt{a-x} \cdot x^{\rho-1}\dd x
+ \int_0^a A(x) x^{\rho-1}\dd x .
\end{equation}
We have
\begin{equation*}
\begin{split}
\sqrt{\frac{2}{\kappa(a,0)} } \int_0^a \sqrt{a-x} \cdot x^{\rho-1}\dd x &=
\sqrt{\frac{2}{\kappa(a,0)} }\; a^{\tfrac 12+\rho } \int_0^1(1-z)^{1/2}z^{\rho-1} \dd z
 \\
&= \sqrt{\frac{2}{\kappa(a,0)} }\; a^{\tfrac 12 +\rho }B(\tfrac 32, \rho)
\end{split}
\end{equation*}
where $B(x,y)$ is the  Euler Beta function.
By Stirling's formula,
$$
B(\tfrac 32, \rho) =\frac{\frac 12 \sqrt{\pi}}{\rho^{3/2}}\Big(1+O\big(\frac 1{|\rho|}\big)\Big) .
$$

For the second term in \eqref{first expansion of I}, we can integrate by parts twice, using $A(a)=A'(a)=0$, to find
$$\int_0^a A(x) x^{\rho-1}\dd x= \frac 1{\rho(\rho+1)} \int_0^a A''(x)x^{\rho+1} dx=O\Big(\frac 1{|\rho|^2}\Big)
$$
since $A''$ is integrable by Lemma~\ref{lem:sing2}. Thus we find
$$
\oscI_1(\rho)=\sqrt{\frac{\pi}{2\kappa(a,0)} }\; \frac{a^{\tfrac 12 +\rho}}{\rho^{3/2}}
+O\Big(\frac 1{|\rho|^2}\Big)
$$
as claimed. The integral $\oscI_2$ can be  treated identically.
\end{proof}

Lemma~\ref{lem:asymp tilde f} gives an upper bound for $\oscI_1(\rho) + \oscI_2(\rho)$. In \S \ref{sec:distribution}, we will also need a non-vanishing result for $\oscI_1(\rho) + \oscI_2(\rho)$. The following lemma will suffice:

\begin{lemma}\label{lemma:I>>}
  For infinitely many (in fact a positive proportion) of the zeros $\rho = \tfrac 12+i\gamma$, we have
  $$ |\oscI_1(\rho) + \oscI_2(\rho)| \gg \frac 1{\gamma^{3/2}}$$
  and in particular $\oscI_1(\rho) + \oscI_2(\rho)$  is nonzero infinitely often.
\end{lemma}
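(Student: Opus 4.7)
The plan is to substitute the asymptotic from Lemma~\ref{lem:asymp tilde f} and reduce the problem to a lower bound on a two-term exponential sum. Setting $C_a=\sqrt{\pi a/(2\kappa(a,0))}$ and $C_b=\sqrt{\pi b/(2\kappa(0,b))}$, Lemma~\ref{lem:asymp tilde f} yields
$$\oscI_1(\rho)+\oscI_2(\rho)=\frac{C_a\,a^{\rho}+C_b\,b^{\rho}}{\rho^{3/2}}+O\bigl(|\rho|^{-2}\bigr),$$
and since $|\rho|^{-2}=o(|\gamma|^{-3/2})$, it suffices to exhibit a positive proportion of zeros $\rho=\tfrac12+i\gamma$ on which $|C_a a^{\rho}+C_b b^{\rho}|$ is bounded below by a positive absolute constant.

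On the critical line, writing $\alpha=\sqrt{a}\,C_a>0$ and $\beta=\sqrt{b}\,C_b>0$, an immediate expansion gives
$$|C_a a^{\rho}+C_b b^{\rho}|^2=\alpha^2+\beta^2+2\alpha\beta\cos\bigl(\gamma\log(a/b)\bigr),$$
whose minimum over $\gamma\in\R$ is $(\alpha-\beta)^2$. The argument then splits into cases. If $\alpha\neq\beta$ (the generic situation; it holds, for instance, for every ellipse with $a\neq b$), the right-hand side is uniformly bounded below by $(\alpha-\beta)^2>0$ for every zero, and the claim follows with no use of arithmetic. If $\alpha=\beta$ and $a=b$ (as for the circle), the cosine argument vanishes identically and the expression equals the positive constant $4\alpha^2$.

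The delicate remaining case is $\alpha=\beta$ with $a\neq b$, where the squared modulus equals $2\alpha^2\bigl(1+\cos(\gamma\log(a/b))\bigr)$ and genuinely vanishes along the arithmetic progression $\gamma\log(a/b)\equiv\pi\pmod{2\pi}$. To handle it I would invoke the unconditional equidistribution of $\{\gamma\log(a/b)/(2\pi)\}\bmod 1$ over the ordinates of nontrivial zeros, a classical theorem of Hlawka and Fujii obtained via Weyl's criterion from explicit-formula estimates for $\sum_{0<\gamma\leq T}e^{it\gamma}$. Equidistribution implies that for any fixed $\delta>0$, the proportion of zeros $\gamma\leq T$ with $\cos(\gamma\log(a/b))<-1+\delta$ tends to $O(\sqrt{\delta})$ as $T\to\infty$, so choosing $\delta$ small but positive yields a positive proportion of zeros on which $|C_a a^{\rho}+C_b b^{\rho}|^2\geq 2\alpha^2\delta$, finishing the proof. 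This equidistribution input is the main obstacle; for the weaker conclusion of nonvanishing infinitely often, it would already suffice to observe that the zero set of $C_a a^{\rho}+C_b b^{\rho}$ on the critical line is a single arithmetic progression in $\gamma$, which cannot contain all but finitely many Riemann zeros since $N(T)\sim(T/2\pi)\log T$.
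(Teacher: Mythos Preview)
Your proof is correct and follows essentially the same route as the paper: substitute the asymptotic from Lemma~\ref{lem:asymp tilde f} and, in the nontrivial case, invoke Hlawka's equidistribution of $\{\alpha\gamma\}\bmod 1$ to bound $|e^{i\gamma\log(b/a)}+c|$ from below on a positive proportion of zeros. Your case split is marginally sharper than the paper's---you note that when $\alpha\neq\beta$ (equivalently $c\neq1$) the reverse triangle inequality already gives a uniform lower bound $(\alpha-\beta)^2$ for \emph{every} zero, so equidistribution is only needed in the degenerate case $\alpha=\beta$, $a\neq b$---whereas the paper applies equidistribution whenever $a\neq b$ regardless of whether $c=1$; but the key arithmetic input is the same.
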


\begin{proof}
According to Lemma~\ref{lem:asymp tilde f},
we have
\[
\begin{split}
   |\oscI_1(\rho) + \oscI_2(\rho)| 
  &=
  \frac{\sqrt{\pi}}{\sqrt{2}|\rho|^{3/2}} \; \Big|  \frac{a}{\sqrt{\kappa(a,0)}}a^{i\gamma} + \frac{b}{\sqrt{\kappa(0,b)}} b^{i\gamma} \Big|
  +O\Big(\frac 1{|\rho|^2}\Big)
\\
&= \frac {C}{ |\rho|^{3/2}}\Big| e^{i\gamma \log(b/a)}+c\Big|+O\Big(\frac 1{|\rho|^2}\Big)
\end{split}
\]
where $C>0$, $c>0$ are independent of $\gamma$.

Now if $a=b$ then we get
$$ |\oscI_1(\rho) + \oscI_2(\rho)| \sim \frac{C(1+c)}{\gamma^{3/2}}\gg \frac 1{\gamma^{3/2}}$$
for all $\gamma\gg 1$ as we claim.

If $a\neq b$, that is $\log(b/a)\neq 0$, then we use a result of Hlawka \cite{Hlawka} (see also Rademacher \cite{Rademacher} for a proof assuming RH), for any $\alpha\neq 0$, the sequence $\{\alpha\gamma :\gamma>0\}$ is uniformly distributed modulo one. Hence for a positive proportion of $\gamma$'s, we have $\Re e^{i\gamma \log(b/a)}>\tfrac 12$, and hence for these we have
$$ \Big| e^{i\gamma \log(b/a)}+c\Big| \geq \frac 12 +c\geq \frac 12$$
so that for a positive proportion of $\gamma$'s,
$$
 |\oscI_1(\rho) + \oscI_2(\rho)| \gg \frac 1{\gamma^{3/2}}
$$
as claimed.
\end{proof}

\section{Counting prime points}\label{sec:PP_f}

Now we want to consider prime points in a symmetric oval $\Omega$ as above.

Let $y=f(x)$ be the function which gives the boundary of $\Omega$ in the  first quadrant. By our assumptions, we know $f(x)$ satisfies that
\[
  f(0)=b>0,\ f(a)=0, \
   f'(0)=0 \quad \textrm{and} \quad
   f'(x)\searrow-\infty \ \textrm{as}\ x\nearrow a.
\]
Since the curvature of the boundary is non-vanishing, we know that $f'(x)<0$ for all $x\in(0,a)$.
Let $R>0$ be a large parameter.

\subsection{The main term}
We first give  the main terms    in Theorem~\ref{PNT for omega}, as a simple consequence of the Prime Number Theorem:
\begin{proposition}
Let $\Omega $ be a symmetric oval. Then
$$\psi_\Omega(R)\sim \area(\Omega)R^2 \quad and \quad \pi_\Omega(R)\sim  \frac{\area(\Omega)R^2}{(\log R)^2}   ,\qquad R\to \infty .$$
\end{proposition}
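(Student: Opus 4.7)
The plan is to reduce the count to two applications of the Prime Number Theorem, using the axis symmetry of $\Omega$. Since $\Lambda(0)=0$, lattice points on the coordinate axes contribute nothing, and the fourfold symmetry lets me write
\[
\psi_\Omega(R) = 4\sum_{1\leq m \leq Ra} \Lambda(m)\,\psi\bigl(R\, f(m/R)\bigr),
\]
by slicing the portion of $R\Omega$ in the open first quadrant by vertical lines $x = m$ and collapsing the inner sum to the Chebyshev $\psi$.

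Next, I would apply PNT to the inner sum as $\psi(y) = y + o(y)$, replacing $\psi(R f(m/R))$ by $R f(m/R)$. A thin range near $m = Ra$, where $Rf(m/R)$ fails to tend to infinity, needs a separate crude bound; using $f(m/R) = O(\sqrt{a - m/R})$ from Lemma~\ref{lem:sing}, this boundary zone contains only $O(R^{1/2})$ integers and contributes $o(R^2)$. Then a second application of PNT, in the form of partial summation against $\psi(x) = x + o(x)$, evaluates
\[
R\sum_{m\leq Ra}\Lambda(m)\, f(m/R) = R^2\int_0^a f(u)\,\dd u + o(R^2) = \frac{\area(\Omega)}{4}\,R^2 + o(R^2),
\]
since the fourfold symmetry of $\Omega$ gives $\int_0^a f(u)\,\dd u = \area(\Omega)/4$. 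Multiplying by $4$ yields $\psi_\Omega(R) \sim \area(\Omega) R^2$.

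Finally, to deduce $\pi_\Omega(R) \sim \area(\Omega) R^2/(\log R)^2$, I would first observe that the contribution of higher prime powers (terms where $m$ or $n$ is $p^k$ with $k \geq 2$) is bounded by $O(R^{3/2} \log^2 R)$, so one also has $\sum (\log p)(\log q) \sim \area(\Omega) R^2$, the sum over prime pairs $(p,q)\in R\Omega\cap\Z^2$; then a standard two-variable partial summation converts the $(\log p)(\log q)$ weighting to the unweighted count, producing the factor $1/(\log R)^2$. The main technical subtlety throughout is handling the partial summation at the vertex $x = a$, where $f'$ diverges; but Lemma~\ref{lem:sing2} provides $|f'(u)| = O(1/\sqrt{a-u})$, which is integrable on $[0,a]$, and the boundary term at $x = Ra$ vanishes because $f(a) = 0$, so no real obstacle arises.
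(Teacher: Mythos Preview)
Your proposal is correct and follows essentially the same route as the paper: reduce by fourfold symmetry to a vertical slicing in the first quadrant, apply PNT to the inner sum, then apply PNT again via partial summation to the outer sum to produce $R^2\int_0^a f$. You are in fact slightly more careful than the paper about the thin strip near $m=Ra$ where $Rf(m/R)$ is bounded. The only point of departure is the passage to $\pi_\Omega$: the paper, rather than carrying out a two-variable Abel summation, simply discards the range $\min(m,n)<R/(\log R)^{10}$ and then replaces each $\log p$ by $\log R$ on the remaining range; your partial-summation approach is equally standard and reaches the same conclusion.
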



\begin{proof}
Using the symmetry of $\Omega$, it suffices to perform the analysis in the positive quadrant, where we sum over lattice points with prime power coordinates lying under the graph of $y=f(x)$:
\begin{equation*}
      \psi_{\Omega}(R)
     =4 \sum_{m\leq aR} \Lambda(m) \sum_{n\leq Rf(m/R)}\Lambda(n) .
\end{equation*}
By the Prime Number Theorem, the inner sum is
\[
\sum_{n\leq Rf(m/R)}\Lambda(n)  =Rf(\frac {m}{R}) +o(R)
\]
and so
\[
 \psi_{\Omega}(R)= 4R \sum_{m\leq aR}\Lambda(m)f(\frac {m}{R}) +o(R^2).
\]
Applying summation by parts, using the Prime Number Theorem again,    gives
\[
 \psi_{\Omega}(R)= 4R^2\int_0^a f(v)\dd v +o(R^2) = \area(\Omega)R^2 + o(R^2).
\]

To prove the claim about $\pi_\Omega$, we first bound the contribution to $\psi_\Omega(R)$
of pairs $(m,n)$ where at least one of them is less than $R/(\log R)^{10}$ by
\begin{multline*}
 \sum_{\substack{m ,n\ll R\\ \min(m,n)<R/(\log R)^{10}}}\Lambda(m) \Lambda(n)
\\
\ll  (\log R)^2 \#\{(m,n): m,n\ll R,\min(m,n)<\frac{R}{(\log R)^{10}} \}
\\
 \ll (\log R)^2 \frac{R^2}{(\log R)^{10}}
\end{multline*}
which is negligible for our purposes.

Moreover, the contribution of $(m,n)$ for which at least one is not a prime, is bounded by
$$
\ll \log R\sum_{p\ll R^{1/2} } \log p \sum_{q\ll R} \log q\ll R^{3/2}(\log R)^3
$$
which is again negligible. Thus
$$
\frac 14\psi_\Omega(R)\sim \sum_{R/(\log R)^{10}<p<aR}\log p\sum_{R/(\log R)^{10}<q<Rf(p/R)}\log q
$$
the sum over primes.

For $p\in (R/(\log R)^{10},R)$, we have $\log p\sim \log R$ and likewise for the sum over  $q$. Hence we find
$$
\frac 14\psi_\Omega(R)\sim (\log R)^2  \sum_{R/(\log R)^{10}<p<aR} \sum_{R/(\log R)^{10}<q<Rf(p/R)}1.
$$
Arguing as above, we find
$$
\sum_{R/(\log R)^{10}<p<aR} \sum_{R/(\log R)^{10}<q<Rf(p/R)}1 =\frac 14 \pi_\Omega(R) + O\Big(\frac{R^{2}}{(\log R)^{10}}\Big).
$$
Therefore we find
$$\psi_\Omega(R)\sim (\log R)^2 \pi_\Omega(R)$$
and hence
$$
\pi_\Omega(R)\sim  \frac{\area(\Omega)R^2}{(\log R)^2}
$$
as claimed.
\end{proof}


\subsection{Using RH}

In this section, we give a formula for $\psi_\Omega(R)$ in terms of a sum over zeros of the Riemann zeta function: Define
\begin{equation}\label{eqn:H_f}
  \tilde H_{\Omega}(R) := -4 \sum_{\rho} R^{\rho-1/2} \Big(\oscI_1(\rho) + \oscI_2(\rho) \Big)
\end{equation}
where the Mellin transforms $\oscI_j$ are given in \eqref{def osc}.
Then we show that up to a negligible error, $\tilde H_\Omega(R)$ coincides with the normalized remainder term $H_\Omega(R) = (\psi_\Omega(R)-\area(\Omega)R^2)/R^{3/2}$:
\begin{proposition}\label{prop:EF}
Assume RH. Then
\begin{equation*}\label{eqn:PP}
  \psi_{\Omega}(R)
      = \mathrm{area}(\Omega) R^2
       + R^{3/2} \tilde H_\Omega(R)
     + O\left( R^{4/3} (\log R)^{7/2} \right).
\end{equation*}
\end{proposition}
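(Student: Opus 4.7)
My plan is to apply the truncated Riemann--von Mangoldt explicit formula twice, in the spirit of Fujii's analysis of the Goldbach problem. First, by the 4-fold symmetry of $\Omega$ and the fact that $\Lambda(0)=0$, the sum reduces to the first quadrant, $\psi_\Omega(R)=4\sum_{m\le aR}\Lambda(m)\psi(Rf(m/R))$. Inserting the truncated explicit formula under RH,
$$\psi(y)=y-\sum_{|\gamma|\le T}\frac{y^\rho}{\rho}+O\!\Big(\tfrac{y\log^2(yT)}{T}+\log y\Big),$$
at $y=Rf(m/R)$ and summing in $m$ produces a pointwise error of $O\bigl(R^2\log^2(RT)/T+R\log R\bigr)$, since $\sum_m\Lambda(m)f(m/R)\ll R$, and isolates the two leading sums $4R\sum_m\Lambda(m)f(m/R)$ and $-4\sum_{|\gamma|\le T}(R^\rho/\rho)\sum_m\Lambda(m)f(m/R)^\rho$ for further evaluation.

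Next I would evaluate each of those sums by partial summation, writing $\sum_m\Lambda(m)h(m)=\int_0^{aR}h(u/R)\,d\psi(u)=\int h(u/R)\,du+\int h(u/R)\,dE(u)$ with $E=\psi-u$ (so $|E(u)|\ll\sqrt{u}\log^2 u$ on RH), integrating by parts to move the derivative onto $h$, and then reinserting the truncated explicit formula for $E$. The boundary terms vanish because $f(a)=0$ and $E(0)=0$, and the integrable singularities of $f^{\rho-1}f'$ and $v^\rho f'$ at $v=a$ (controlled by Lemma~\ref{lem:sing2}) keep all the integrals convergent. Together with the identities $\int_0^a v^\rho f'(v)\,dv=-\rho\,\oscI_1(\rho)$ and $\int_0^a f(v)^\rho\,dv=\rho\,\oscI_2(\rho)$, obtained by a single integration by parts using $f(a)=0$, the \emph{linear} zero contributions assemble into exactly $-4\sum_{|\gamma|\le T}R^{\rho+1}\bigl(\oscI_1(\rho)+\oscI_2(\rho)\bigr)$. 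Completing this sum to all non-trivial zeros, using the decay $\oscI_1(\rho)+\oscI_2(\rho)\ll|\gamma|^{-3/2}$ from Lemma~\ref{lem:asymp tilde f}, costs a tail of $O(R^{3/2}T^{-1/2}\log T)$, yielding
$$\psi_\Omega(R)=\area(\Omega)R^2+R^{3/2}\tilde H_\Omega(R)+\mathcal{E}(R,T).$$

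The main obstacle is controlling the composite remainder $\mathcal{E}(R,T)$, which collects, on top of the two error sources above, \emph{double zero-sum} cross terms of the form $\sum_{\rho,\rho'}R^{\rho+\rho'}J(\rho,\rho')/(\rho\rho')$ with $J(\rho,\rho')=\int_0^a f(v)^\rho v^{\rho'-1}\,dv$, produced by the second application of the explicit formula inside the inner sum. For these I would exploit oscillatory cancellation: the singular expansion of $f$ near $v=a$ from Lemma~\ref{lem:sing2} gives, via a stationary-phase / integration-by-parts argument at the endpoint, a bound of the form $|J(\rho,\rho')|\ll\max(|\gamma|,|\gamma'|)^{-1}$, which keeps the double sum bounded by $O(R(\log R)^c)$ for a small constant $c$. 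Optimally balancing the two dominant error sources $R^2(\log RT)^2/T$ and $R^{3/2}(\log T)/T^{1/2}$ by the choice $T=R^{2/3}$ then produces the claimed bound $\mathcal{E}(R,T)=O(R^{4/3}(\log R)^{7/2})$. The delicate point will be tracking the logarithmic factors accumulating through the two successive applications of the explicit formula, which is what pins down the exponent $7/2$ in the error.
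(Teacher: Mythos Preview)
Your overall architecture---two successive applications of the truncated explicit formula, partial summation, identification of the linear zero contributions with $R^{3/2}\tilde H_\Omega(R)$---is exactly the paper's. The gap is in your treatment of the bilinear zero sum.

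First, its shape: after the second application of the explicit formula and one integration by parts, the cross term is
\[
\mathcal S=\sum_{|\gamma|\le T}\sum_{|\gamma'|\le T'}\frac{R^{\rho+\rho'}}{\rho}\,J(\rho,\rho'),
\qquad J(\rho,\rho')=\int_0^a f(v)^\rho v^{\rho'-1}\,dv,
\]
with only \emph{one} zero in the denominator; a further integration by parts merely swaps $\rho$ for $\rho'$ and does not produce the factor $1/(\rho\rho')$ you wrote. Second, the pointwise estimate $|J(\rho,\rho')|\ll\max(|\gamma|,|\gamma'|)^{-1}$ is not available. When $\gamma$ and $\gamma'$ have the same sign, the phase $\gamma\log f(v)+\gamma'\log v$ has a genuine interior critical point on $(0,a)$ (since $f'/f$ runs from $0$ at $v=0$ to $-\infty$ at $v=a$), and stationary phase yields only $|J|\asymp(\,|\gamma|+|\gamma'|\,)^{-1/2}$. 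With a single $1/\rho$ this makes the double sum of size $R\,T^{1/2}$ times logarithms, not $R(\log R)^c$. In particular $\mathcal S$ is the \emph{dominant} error term, and it is what forces the exponent $4/3$ and the power $(\log R)^{7/2}$; it cannot be absorbed as you propose.

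The paper does not attempt a pointwise bound on $J$. Instead it applies Cauchy--Schwarz in $v$ to separate the two zero sums, reduces the inner integral to $\int_0^a f(v)^{-1/2+i(\gamma-\gamma')}\,df(v)\ll (1+|\gamma-\gamma'|)^{-1}$, and then uses the standard count $\sum_{|\gamma|,|\gamma'|\le T}(1+|\gamma-\gamma'|)^{-1}\ll T(\log T)^3$ to obtain $\mathcal S\ll R\,T^{1/2}(\log R)^{7/2}$. Balancing this against $R^2(\log R)^2/T$ gives $T=R^{2/3}$ (with a second truncation $T'=R^{5/6}$ to control the error from the inner explicit formula), and that is where the stated remainder $O(R^{4/3}(\log R)^{7/2})$ actually comes from.
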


\begin{proof}

By the approximate explicit formula (see e.g. Davenport \cite[\S17, eq. (9) and (10)]{davenport2000multiplicative}),   for $x\geq2$ and $T>1$,
\[
  \sum_{n\leq x}\Lambda(n) = x - \sum_{\substack{\rho\\|\gamma|\leq T}}\frac{x^\rho}{\rho} + O\left(\frac{x(\log xT)^2}{T} + \log x\right) .
\]
Together with the symmetry of $\Omega$, we have
\begin{equation}\label{eqn:applyEF1}
  \begin{split}
     \frac{1}{4} \psi_{\Omega}(R)
     & = \sum_{m\leq aR} \Lambda(m) \sum_{n\leq Rf(m/R)}\Lambda(n) \\
       &  = R\sum_{m\leq aR} \Lambda(m) f(m/R)  \\
       & \hskip 30pt
       -
       \sum_{m\leq aR} \Lambda(m) \sum_{\substack{\rho\\|\gamma|\leq T}}\frac{(Rf(m/R))^{\rho}}{\rho}
       + O\left(\frac{R^2(\log R)^2}{T}\right) \\
       &  =: \textrm{I} + \textrm{II} + O\left(\frac{R^2(\log R)^2}{T}\right),
  \end{split}
\end{equation}
say, where we assume $1<T\ll R$.

By the partial summation, for $1<T'\ll R$, we have 
\[
  \begin{split}
    &\sum_{m\leq aR} \Lambda(m) f(m/R)^\rho
    = \int_{2}^{aR} f(u/R)^\rho \dd \sum_{m\leq u}\Lambda(m) \\
    & \hskip 30pt = \int_{2}^{aR} f(u/R)^\rho \dd
    \bigg(u - \sum_{\substack{\rho'\\|\gamma|\leq T'}}\frac{u^{\rho'}}{\rho'}\bigg)
    + O\left( \frac{R(\log R)^2}{T'} \int_{2}^{aR}   |\dd f(u/R)^\rho| \right) \\
    & \hskip 30pt = \int_{2}^{aR} f(u/R)^\rho \dd
    \bigg(u - \sum_{\substack{\rho'\\|\gamma|\leq T'}}\frac{u^{\rho'}}{\rho'}\bigg)  \\
    & \hskip 90pt
    + O\left(\frac{ R (\log R)^2}{T'} \frac{|\rho|}{R} \int_{2}^{aR} f\Big(\frac{u}{R}\Big)^{-1/2} \Big|f'\Big(\frac{u}{R}\Big)\Big| \dd u \right) \\
    & \hskip 30pt = \int_{0}^{aR} f(u/R)^\rho  \dd u
    - \sum_{\substack{\rho'\\|\gamma'|\leq T'}} \frac{1}{\rho'}
    \int_{2}^{aR} f(u/R)^\rho \dd u^{\rho'}
    + O\left(\frac{|\rho| R (\log R)^2}{T'}\right).
  \end{split}
\]
We have again
\begin{equation*}
  \begin{split}
      \sum_{\substack{\rho'\\|\gamma'|\leq T'}} \frac{1}{\rho'}
    \int_{0}^{2} f(u/R)^\rho \dd u^{\rho'}
    & =  \sum_{\substack{\rho'\\|\gamma'|\leq T'}} \frac{1}{\rho'} \Big(
     f(u/R)^\rho u^{\rho'}\Big|_{0}^{2} -
      \int_{0}^{2} u^{\rho'} \dd f(u/R)^\rho \Big) \\
    & = O \left( (\log R)^2 + |\rho| R^{-1} (\log R)^2 \right).
  \end{split}
\end{equation*}
Hence
\begin{equation} \label{eqn:w-sum-Lambda}
  \begin{split}
    & \sum_{m\leq aR} \Lambda(m) f(m/R)^\rho
    = R\int_{0}^{a} f(v)^\rho  \dd u \\
    & \hskip 60pt - R^{\rho'}\sum_{\substack{\rho'\\|\gamma'|\leq T'}}
    \int_{0}^{a} f(v)^\rho v^{\rho'-1} \dd v
    + O\left(\frac{|\rho| R(\log R)^2}{T'}\right).
  \end{split}
\end{equation}
The above asymptotic formula holds for $\rho=1$ by a similar argument.

Note that $R^2\int_0^a f(v)dv = \tfrac 14\area(\Omega)R^2$.
By \eqref{eqn:w-sum-Lambda} with $\rho=1$, we get
\begin{equation}\label{eqn:firstterm}
\textrm{I}   = \frac 14\area(\Omega)R^2- R\sum_{\substack{\rho\\|\gamma|\leq T}} R^\rho
   \oscI_1(\rho)
  + O\left(\frac{R^2(\log R)^2}{T} \right).
\end{equation}
with the Mellin transform $\oscI_1(\rho)$ given by \eqref{def osc}.

Now we handle the second term. By \eqref{eqn:w-sum-Lambda} again, we have
\begin{equation*}
  \begin{split}
    \textrm{II} & = -\sum_{m\leq aR} \Lambda(m)
    \sum_{\substack{\rho\\|\gamma|\leq T}}\frac{R^\rho f(u/R)^\rho}{\rho}
    = - \sum_{\substack{\rho\\|\gamma|\leq T}} \frac{R^\rho}{\rho}
    \sum_{m\leq aR} \Lambda(m) f(u/R)^\rho \\
    & = - R \sum_{\substack{\rho\\|\gamma|\leq T}} \frac{R^\rho}{\rho}
    \int_{0}^{a} f(v)^\rho \dd v
    + \sum_{\substack{\rho\\|\gamma|\leq T}}
    \sum_{\substack{\rho'\\|\gamma'|\leq T'}} \frac{R^{\rho+\rho'}}{\rho}
    \int_{0}^{a} f(v)^\rho v^{\rho'-1} \dd v
      \\ & \hskip 45pt  + O\left(\frac{T R^{3/2}(\log R)^3}{T'}\right).
  \end{split}
\end{equation*}
We change variable $u=f(v)$, so $v=g(u)$, to transform
\begin{multline*}
  \frac{1}{\rho}  \int_{0}^{a} f(v)^\rho \dd v
  = -\int_0^b \frac{u^\rho}{\rho} \dd g(u)
  = -\frac{u^\rho}{\rho} g(u)\Big|_0^b + \int_{0}^{b} u^{\rho-1} g(u) \dd u
\\ =\int_{0}^{b} u^{\rho-1} g(u) \dd u=:\oscI_2(\rho)
\end{multline*}
and obtain
\begin{equation}\label{eqn:secondterm}
  \begin{split}
    & \textrm{II} = - R \sum_{\substack{\rho\\|\gamma|\leq T}}  R^\rho \oscI_2(\rho)
    + \sum_{\substack{\rho\\|\gamma|\leq T}}
    \sum_{\substack{\rho'\\|\gamma'|\leq T'}} \frac{R^{\rho+\rho'}}{\rho}
    \int_{0}^{a} f(v)^\rho v^{\rho'-1} \dd v \\
    & \hskip 150pt
   + O\Big(\frac{T R^{3/2}(\log R)^3}{T'}\Big).
   \end{split}
\end{equation}

Combining \eqref{eqn:applyEF1}, \eqref{eqn:firstterm}, and \eqref{eqn:secondterm}, and assuming that $2< T \leq T' \ll R$, we have
\begin{equation}\label{eqn:primesindomains}
  \begin{split}
       \psi_{\Omega}(R)
      &=  \area(\Omega)R^2
       - 4R \sum_{\substack{\rho\\|\gamma|\leq T}} R^\rho \Big(\oscI_1(\rho) + \oscI_2(\rho) \Big)
  \\  &\hskip 60pt  + 4 \mathcal S
     + O\left(\frac{R^2(\log R)^2}{T} + \frac{T R^{3/2}(\log R)^3}{T'}\right),
  \end{split}
\end{equation}
where
\begin{equation}\label{eqn:Sigma}
  \mathcal S := \sum_{\substack{\rho\\|\gamma|\leq T}}
    \sum_{\substack{\rho'\\|\gamma'|\leq T'}} \frac{R^{\rho+\rho'}}{\rho}
    \int_{0}^{a} f(v)^\rho v^{\rho'-1} \dd v.
\end{equation}
Then we have
\begin{equation*}
  \begin{split}
    \mathcal S
    & = \sum_{\substack{\rho\\|\gamma|\leq T}}
    \sum_{\substack{\rho'\\|\gamma'|\leq T'}} \frac{R^{\rho+\rho'}}{\rho\rho'} \Big( f(v)^\rho v^{\rho'}\Big|_{0}^{a} -
    \int_{0}^{a} \rho f(v)^{\rho-1} f'(v) v^{\rho'} \dd v \Big)   \\
    & = -\sum_{\substack{\rho\\|\gamma|\leq T}}
    \sum_{\substack{\rho'\\|\gamma'|\leq T'}} \frac{R^{\rho+\rho'}}{\rho'}
    \int_{0}^{a} f(v)^{\rho-1} f'(v) v^{\rho'} \dd v   \\
    & = - \int_{0}^{a} f'(v) \sum_{\substack{\rho\\|\gamma|\leq T}} R^{\rho} f(v)^{\rho-1}
    \sum_{\substack{\rho'\\|\gamma'|\leq T'}} \frac{R^{\rho'}}{\rho'}
    v^{\rho'} \dd v.
  \end{split}
\end{equation*}

We now assume RH, and write the zeros  as $\rho=\tfrac 12 +i\gamma$, $\rho'=\tfrac 12 +i\gamma'$ .
By Cauchy--Schwarz,
\begin{equation*}
  \begin{split}
    \mathcal S & \ll
     R \Big(\int_{0}^{a} |f'(v)| \Big|\sum_{\substack{\rho\\|\gamma|\leq T}} R^{i\gamma} f(v)^{\rho-3/4} \Big|^2 \dd v \Big)^{1/2} \\
     & \hskip 90pt \cdot
    \Big(\int_{0}^{a} |f'(v)| f(v)^{-1/2}
    \Big|\sum_{\substack{\rho'\\|\gamma'|\leq T'}} \frac{R^{i\gamma'}}{\rho'}
    v^{1/2+i\gamma'} \Big|^2 \dd v  \Big)^{1/2} \\
    & \ll  R \Big(\int_{0}^{a} |f'(v)| \Big|\sum_{\substack{\rho\\|\gamma|\leq T}} R^{i\gamma} f(v)^{-1/4+i\gamma} \Big|^2 \dd v \Big)^{1/2} \\
     & \hskip 90pt \cdot
      \Big( (\log R)^4 \int_{0}^{a} |f'(v)| f(v)^{-1/2}\dd v  \Big)^{1/2}
  \end{split}
\end{equation*}
on using, for $|v|\leq a$,
\[
\sum_{\substack{\rho'\\|\gamma'|\leq T'}} \Big| \frac{R^{i\gamma'}}{\rho'}    v^{1/2+i\gamma'} \Big|  \ll
\sum_{\substack{\rho'\\|\gamma'|\leq T'}} \frac 1{|\rho'|} \ll
(\log T')^2 \ll  (\log R)^2 .
\]
Note that $\int_{0}^{a} |f'(v)| f(v)^{-1/2}\dd v = -2f(v)^{1/2}\big|_{0}^{a} = 2\sqrt{b}$. So we have
\[
  \begin{split}
    \mathcal S & \ll R (\log R)^2 \Big(\int_{0}^{a} |f'(v)| f(v)^{-1/2}
    \sum_{\substack{\rho\\|\gamma|\leq T}} \sum_{\substack{\rho'\\|\gamma'|\leq T}} (Rf(v))^{i(\gamma-\gamma')}
    \dd v  \Big)^{1/2} \\
    & \ll R (\log R)^2 \Big(
    \sum_{\substack{\rho\\|\gamma|\leq T}} \sum_{\substack{\rho'\\|\gamma'|\leq T}} R^{i(\gamma-\gamma')}
    \int_{0}^{a} f(v)^{-1/2+i(\gamma-\gamma')} (-f'(v)) \dd v  \Big)^{1/2} .
  \end{split}
\]
Hence we obtain
\[
  \begin{split}
    \mathcal S
    & \ll R (\log R)^2 \Big(
    \sum_{\substack{\rho\\|\gamma|\leq T}} \sum_{\substack{\rho'\\|\gamma'|\leq T}}
    \big|\int_{0}^{a} f(v)^{-1/2+i(\gamma-\gamma')} \dd f(v)\big|  \Big)^{1/2} \\
    & \ll R (\log R)^2 \Big(
    \sum_{\substack{\rho\\|\gamma|\leq T}} \sum_{\substack{\rho'\\|\gamma'|\leq T}}
    \frac{1}{1+|\gamma-\gamma'|}  \Big)^{1/2}
    \ll RT^{1/2} (\log R)^{7/2}.
  \end{split}
\]
Now by taking $T=R^{2/3}$ and $T'=R^{5/6}$, we have
\begin{equation*}\label{eqn:PPinD}
        \psi_{\Omega}(R)
       =  \area(\Omega)R^2
       - 4R \sum_{\substack{\rho\\|\gamma|\leq R^{2/3}}} R^\rho
          \Big(\oscI_1(\rho) + \oscI_2(\rho) \Big)
+ O\left( R^{4/3} (\log R)^{7/2} \right).
\end{equation*}

Using Lemma~\ref{lem:asymp tilde f}, we may extend the sum over all zeros, introducing an error of $O(R^{7/6+o(1)})$ which is negligible relative to the other remainders. Thus define
$\tilde{H}_\Omega(R)$ as in \eqref{eqn:H_f}.
Then we have
\begin{equation*}\label{eqn:PP}
  \psi_{\Omega}(R)
      = \mathrm{area}(\Omega) R^2
       + R^{3/2} \tilde H_\Omega(R)
     + O\left( R^{4/3} (\log R)^{7/2} \right).
\end{equation*}
This completes the proof.
\end{proof}

\begin{corollary}
Assuming RH, we have
$$\psi_{\Omega}(R) =  \mathrm{area}(\Omega) R^2 +O(R^{3/2}).$$
\end{corollary}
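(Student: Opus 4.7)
The plan is to deduce the corollary directly from Proposition \ref{prop:EF} by showing that, under RH, the function $\tilde H_\Omega(R)$ defined in \eqref{eqn:H_f} is uniformly bounded in $R$. Since the error term $O(R^{4/3}(\log R)^{7/2})$ in Proposition \ref{prop:EF} is of smaller order than $R^{3/2}$, the desired bound $\psi_\Omega(R) = \area(\Omega)R^2 + O(R^{3/2})$ will follow once we establish $\tilde H_\Omega(R) = O(1)$.

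First I would write each nontrivial zero as $\rho = \tfrac12 + i\gamma$, using RH, so that $|R^{\rho - 1/2}| = |R^{i\gamma}| = 1$. Consequently, term by term,
\[
  |R^{\rho-1/2}(\oscI_1(\rho) + \oscI_2(\rho))| = |\oscI_1(\rho) + \oscI_2(\rho)|.
\]
Next I would apply Lemma \ref{lem:asymp tilde f}, which gives $|\oscI_1(\rho) + \oscI_2(\rho)| \ll |\gamma|^{-3/2}$ uniformly for $|\gamma|$ large; the finitely many low-lying zeros contribute an $O(1)$ amount. Therefore
\[
  |\tilde H_\Omega(R)| \ll 1 + \sum_{\gamma > 0} \frac{1}{\gamma^{3/2}}.
\]

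To see the sum converges, I would invoke the Riemann--von Mangoldt zero-counting estimate $N(T) \ll T \log T$, which gives $\sum_{T < \gamma \leq 2T} \gamma^{-3/2} \ll T^{-1/2}\log T$. A dyadic sum over $T = 2^k$ then converges. This makes $\tilde H_\Omega(R)$ bounded uniformly in $R$, and substituting back into Proposition \ref{prop:EF} yields the corollary.

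There is really no hard step here: the work was already done in Proposition \ref{prop:EF} and Lemma \ref{lem:asymp tilde f}. The only thing to verify is the convergence of $\sum_\gamma \gamma^{-3/2}$, which follows from standard density bounds for zeros of $\zeta(s)$ and does not require any additional hypothesis beyond RH.
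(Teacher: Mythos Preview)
Your proposal is correct and follows essentially the same approach as the paper's own proof: use RH to get $|R^{\rho-1/2}|=1$, apply Lemma~\ref{lem:asymp tilde f} to bound each term by $O(|\gamma|^{-3/2})$, and note the sum over zeros converges so that $\tilde H_\Omega(R)=O(1)$, whence the result follows from Proposition~\ref{prop:EF}. Your write-up simply makes explicit the Riemann--von Mangoldt estimate that the paper leaves implicit.
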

\begin{proof}
Indeed, from RH, $|R^{\rho-1/2}|=1$ and inserting that into the definition \eqref{eqn:H_f} of $\tilde H_\Omega(R)$ and using Lemma~\ref{lem:asymp tilde f} shows that $\tilde H_\Omega(R)=O(1)$. The statement then follows from Proposition~\ref{prop:EF}.
\end{proof}


\section{The value distribution function $p_\Omega$}\label{sec:distribution}

We  now compare the empirical remainder term
$$H_\Omega(R) = \frac{\psi_\Omega(R)-\area(\Omega)R^2}{R^{3/2}}$$
with the sum \eqref{eqn:H_f}
$$\tilde H_\Omega(R) = -4\sum_\rho R^{\rho-\tfrac 12} \Big(\oscI_1(\rho)+\oscI_2(\rho)\Big)
$$
the sum over the nontrivial zeros of the Riemann zeta function. Assuming the Riemann Hypothesis, we write them as $\rho=\tfrac 12+i\gamma$, $\gamma\in \R$. As an immediate consequence of Proposition~\ref{prop:EF} we obtain
\begin{lemma}
 The (logarithmic) value distributions of $H_\Omega$ and of $\tilde H_\Omega$ coincide.
\end{lemma}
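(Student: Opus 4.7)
The plan is to derive the statement as an essentially immediate consequence of Proposition~\ref{prop:EF} together with a soft argument about logarithmic averages. From Proposition~\ref{prop:EF} we read off
\[
H_\Omega(R) - \tilde H_\Omega(R) = \frac{\psi_\Omega(R)-\area(\Omega)R^2}{R^{3/2}} - \tilde H_\Omega(R) = O\!\left( R^{-1/6} (\log R)^{7/2} \right),
\]
which tends to $0$ as $R\to\infty$. So the first step is to record this pointwise decay of the difference.

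Next, I would note that both functions take values in a common bounded set. Indeed, from Lemma~\ref{lem:asymp tilde f} together with RH (giving $|R^{\rho-1/2}|=1$) we obtain $|\tilde H_\Omega(R)|\leq 4\sum_\gamma |\oscI_1(\tfrac12+i\gamma)+\oscI_2(\tfrac12+i\gamma)|\ll \sum_\gamma \gamma^{-3/2}<\infty$, so $\tilde H_\Omega$ is bounded by an absolute constant $A$. Since $H_\Omega - \tilde H_\Omega\to 0$, the function $H_\Omega$ is also bounded, say by some $A'$, for $R$ sufficiently large.

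Then I would invoke uniform continuity. Given any bounded continuous test function $G:\R\to\R$, restrict $G$ to the compact interval $[-A',A']$, on which it is uniformly continuous with modulus of continuity $\omega_G$. The previous two steps give
\[
\bigl|G(H_\Omega(R)) - G(\tilde H_\Omega(R))\bigr| \leq \omega_G\!\left( C R^{-1/6}(\log R)^{7/2} \right) \longrightarrow 0
\]
as $R\to\infty$, uniformly in $R$ large. Averaging against $\frac{\dd R}{R}$ over $[1,X]$ and normalizing by $1/\log X$ yields
\[
\frac{1}{\log X}\int_1^X G(H_\Omega(R))\,\frac{\dd R}{R} = \frac{1}{\log X}\int_1^X G(\tilde H_\Omega(R))\,\frac{\dd R}{R} + o(1),
\]
where the contribution of the bounded initial range $[1,R_0]$ is absorbed in the $o(1)$ term since its logarithmic weight is $O(1/\log X)$. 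Passing to the limit $X\to\infty$, the two logarithmic distributions coincide whenever either exists.

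There is no real obstacle here; the only thing to be slightly careful about is verifying that $H_\Omega$ and $\tilde H_\Omega$ share a common bounded range, so that we may apply uniform continuity of $G$ on a single compact interval rather than merely continuity on $\R$. This is handled by the Corollary above combined with the pointwise approximation.
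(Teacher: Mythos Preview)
Your proof is correct and takes essentially the same approach as the paper, which simply states that the lemma is ``an immediate consequence of Proposition~\ref{prop:EF}'' without spelling out any details. You have supplied the routine argument (pointwise decay of $H_\Omega-\tilde H_\Omega$, common boundedness, uniform continuity of $G$ on a compact interval) that the paper leaves implicit.
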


Therefore, the logarithmic value distribution of $H_\Omega$ is the (ordinary) value distribution of the sum
$$h_\Omega(t) = \tilde H_\Omega(e^t)= -4\sum_\rho   \Big(\oscI_1(\rho)+\oscI_2(\rho)\Big) e^{it\gamma}.$$

Noting that
$$\overline{\oscI_j(\rho)} = \oscI_j(\bar\rho)$$
we find that
\begin{equation}\label{sum for h}
  h_\Omega(t)=-4\sum_\rho   \Big(\oscI_1(\rho)+\oscI_2(\rho)\Big) e^{it\gamma}
=\sum_{\gamma>0} \mathcal B_\Omega(\gamma) \cos(t\gamma+\varphi_\gamma)
\end{equation}
where the sum is over zeros with positive imaginary part $\gamma>0$, and
\begin{equation}\label{def of BOmega}
\mathcal B_\Omega(\gamma) := 8\Big|\oscI_1(\rho)+\oscI_2(\rho)\Big|,\qquad
\varphi_\gamma :=\arg \Big(-\oscI_1(\rho)-\oscI_2(\rho)\Big) .
\end{equation}

According to Lemma~\ref{lem:asymp tilde f},
$$ \mathcal B_\Omega(\gamma)\ll \gamma^{-3/2}, \qquad \gamma\to +\infty.
$$
Since  the $n$-th zero $\gamma_n\approx n/\log n$ by the Riemann--von Mangoldt formula, we see that the sum \eqref{sum for h} is absolutely convergent, and defines a uniformly almost periodic function, hence has a limiting value distribution measure, (see e.g. \cite[Theorem 4.1]{BleherDuke}), as follows from applying the Kronecker--Weyl ergodic theorem:
$$\lim_{T\to \infty} \frac 1T \int_0^T G\Big(h_\Omega(t)\Big) \dd t = \int_{-A}^A G(u)\dd\mu_\Omega(u)$$
for all bounded continuous functions on $[-A,A]$, where
$$A=\max_t |h_\Omega(t)|.$$
Note that by Lemma \ref{lemma:I>>}, we know that $\mathcal B_\Omega(\gamma)$  is nonzero infinitely often.

We now assume the Linear Independence Hypothesis.
Wintner \cite{WintnerDioph} studied the value distribution of a sum of infinitely many cosine waves with incommensurate frequences
$$
h(t):=\sum_{n=1}^\infty a_n\cos(\gamma_n t-\varphi_n)
$$
where $a_n>0$, with $A:=\sum_n a_n<\infty$, and $\{\gamma_n\}$ are linearly independent over the rationals,
showing that there is a smooth\footnote{Smoothness breaks down if we only take a finite sum.} value distribution function $p(u)$, whose characteristic function is given by
\begin{equation}\label{infinite product of J}
\int_{-A}^{A} p(u)e^{isu}\dd u = \prod_{n=1}^\infty J_0(a_ns)
\end{equation}
and that the value distribution is even: $p(u) = p(-u)$.

A form of Nyquist's Sampling Theorem gives a formula for the probability distribution function $p(u)$ that is useful for computational purposes, compare \cite[equation (25)]{Bennett}.\footnote{There is an unfortunate typo in \cite[equation (25)]{Bennett}.}
\begin{lemma}
Let
$$h(t)=\sum_{n=1}^\infty a_n\cos(\gamma_n t-\varphi_n)
$$
where $a_n>0$, with $A:=\sum_n a_n<\infty$, and $\{\gamma_n\}$ are linearly independent over the rationals. Then the value distribution function $p(u)$ of $h$ is smooth and even, and given for $|u|\leq A$ by the convergent Fourier series
\begin{equation}\label{Bennett's formula}
p(u) = \frac 1{2A}
+\frac 1A\sum_{k=1}^\infty   \Big( \prod_{n=1}^\infty J_0\big(\frac{\pi k a_n}{A}\big) \Big)\cos\Big(\frac{\pi k u}{A}\Big), \quad |u|\leq A
\end{equation}
and $p(u)=0$ outside the interval $[-A,A]$.
\end{lemma}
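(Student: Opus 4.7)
The plan is to recognize the claimed formula \eqref{Bennett's formula} as the ordinary Fourier cosine series of $p$ on the interval $[-A,A]$, with Fourier coefficients computed directly from Wintner's characteristic function identity \eqref{infinite product of J}. Since $p$ is supported in $[-A,A]$ and $p(\pm A) = 0$ (in fact all derivatives of $p$ vanish at $\pm A$, being the boundary of the support of a smooth function of compact support), the $2A$-periodic extension of $p$ to $\R$ is a $C^\infty$ function of period $2A$. Its Fourier series therefore converges uniformly on $\R$, and since $p$ is even only cosine modes survive:
$$
p(u) = \frac{c_0}{2} + \sum_{k=1}^\infty c_k \cos\Big(\frac{\pi k u}{A}\Big), \qquad c_k = \frac{1}{A}\int_{-A}^A p(u)\cos\Big(\frac{\pi k u}{A}\Big)\,\dd u.
$$

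The main computation is the evaluation of $c_k$. Since $p$ is real and even, we may replace the cosine with an exponential:
$$
c_k = \frac{1}{A}\int_{-A}^A p(u) e^{i\pi k u/A}\,\dd u,
$$
which is exactly the characteristic function of $p$ evaluated at $s = \pi k/A$. By Wintner's identity \eqref{infinite product of J},
$$
c_k = \frac{1}{A}\prod_{n=1}^\infty J_0\Big(\frac{\pi k a_n}{A}\Big)
$$
for every $k \geq 1$, while $c_0 = \frac{1}{A}\int_{-A}^A p(u)\,\dd u = 1/A$ because $p$ is a probability density. Substituting these values into the cosine series yields the formula \eqref{Bennett's formula}; the final claim that $p$ vanishes outside $[-A,A]$ is part of the hypothesis that $A$ is the supremum of $|h|$.

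The only genuinely delicate point is justifying that the periodic extension of $p$ is smooth enough for the Fourier series to converge pointwise on the closed interval, including the endpoints $\pm A$. This is where Wintner's regularity result is used: smoothness of $p$ combined with compact support forces all derivatives of $p$ to vanish at $\pm A$, so the periodic extension is $C^\infty$ and the coefficients $c_k$ decay faster than any polynomial in $k$, making the series absolutely and uniformly convergent. If one prefers to avoid this boundary analysis, the identity can instead be derived distributionally by applying Poisson summation to the samples of $\hat{p}$ at the grid $\pi k/A$ (a form of Nyquist reconstruction, as hinted by the lemma's attribution), and then upgrading to pointwise equality on $(-A,A)$ via the smoothness of $p$ in the interior.
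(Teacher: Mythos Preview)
Your proof is correct and follows essentially the same route as the paper's: both arguments expand $p$ in a Fourier cosine series of period $2A$ and identify the coefficients as samples of the characteristic function via Wintner's product formula \eqref{infinite product of J}. The paper phrases the first step as the periodization $p_{\rm per}(u)=\sum_{\ell\in\Z}p(u+2A\ell)$ (which for a function supported in $[-A,A]$ is the same object as your periodic extension), while you spell out why the extension is $C^\infty$ at the join points; otherwise the computations coincide. One small wording issue: the vanishing of $p$ outside $[-A,A]$ is not a hypothesis but the immediate consequence of $|h(t)|\le\sum_n a_n=A$.
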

\begin{proof}

We   define a new, $2A$-periodic function on whole real line by
$$
p_{\rm per}(u):=\sum_{\ell\in \Z} p(u+2A\ell)
$$
which is still smooth, and coincides with $p(u)$ on $[-A,A]$.
The Fourier coefficients of $p_{\rm per}(u)$ are
$$
\widehat p_{\rm per}(k) = \frac 1{2A} \int_{-A}^{A} p_{\rm per}(u) e^{-2\pi i k\frac{u}{2A}}  \dd u
= \frac 1{2A}\int_{-\infty}^\infty p(u) e^{-2\pi i k\frac{u}{2A}}  \dd u =\frac 1{2A}\widehat p\Big(\frac{k}{2A}\Big)
$$
 where $\widehat p(\xi) = \int_{-\infty}^\infty p(u)e^{-2\pi i u\xi}\dd u$ is the Fourier transform of $p$.
In particular $\widehat p_{\rm per}(0)=1/(2A)$.
Thus we have
$$
p_{\rm per}(u) = \frac 1{2A}+\sum_{k\neq 0} \frac 1{2A}\widehat p\Big(\frac{k}{2A}\Big) e^{i\pi k\frac{u}{A}}.
$$
The expansion converges pointwise because $p_{\rm per}(u) $ is smooth.
Since $p(u)$ is even, we rewrite
$$
p_{\rm per}(u) = \frac 1{2A}+\frac 1A\sum_{k=1}^\infty \widehat p\Big(\frac{k}{2A}\Big) \cos\Big(\pi k\frac{u}{A}\Big) .
$$
Due to \eqref{infinite product of J}, we have $\widehat p(s) =\prod_{n=1}^\infty J_0(2\pi a_ns)$. Therefore
$$
p_{\rm per}(u) =
\frac 1{2A}+\frac 1A\sum_{k=1}^\infty\Big( \prod_{n=1}^\infty J_0\big(\frac{\pi ka_n}{A}\big) \Big)\cos\Big(\frac{\pi k u}{A}\Big) .
 $$
 Now for $|u|<A$, $p_{\rm per}(u)=p(u)$ and so we obtain the result.
\end{proof}
\begin{corollary}
Assume RH and Hypothesis LI. Then the logarithmic value distribution function $p_\Omega(u)$of $H_\Omega(R)$ is given by \eqref{Bennett's formula} with $\gamma_n>0$ being the imaginary parts of the Riemann zeros, and $a_n=|\mathcal B_\Omega(\gamma_n)|$.
\end{corollary}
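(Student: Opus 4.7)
The plan is to chain together the two preceding lemmas. The first lemma of this section asserts that the logarithmic value distribution of $H_\Omega(R)$ coincides with the (ordinary) value distribution of the uniformly almost periodic function $h_\Omega(t) = \tilde H_\Omega(e^t)$. By \eqref{sum for h} we already have the explicit expansion
\[
h_\Omega(t) = \sum_{\gamma_n > 0} \mathcal B_\Omega(\gamma_n) \cos(t\gamma_n + \varphi_{\gamma_n}),
\]
indexed over positive imaginary parts of the nontrivial zeros of $\zeta$, with nonnegative amplitudes $a_n := \mathcal B_\Omega(\gamma_n)$ and certain phases $\varphi_{\gamma_n}$. Thus it suffices to apply the Wintner/Nyquist-type lemma stated just above to this cosine series.

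Next, I would verify the hypotheses of that lemma. The bound $\mathcal B_\Omega(\gamma) \ll \gamma^{-3/2}$ from Lemma~\ref{lem:asymp tilde f}, combined with the Riemann--von Mangoldt density $\gamma_n \asymp n/\log n$, yields $A = \sum_n a_n < \infty$; Lemma~\ref{lemma:I>>} guarantees $A > 0$ since a positive proportion of the $a_n$ are bounded below by a constant times $\gamma_n^{-3/2}$. Hypothesis LI supplies the $\mathbb{Q}$-linear independence of the frequencies $\{\gamma_n\}$. Any vanishing $a_n$'s are harmless, since $J_0(0) = 1$ leaves the infinite Bessel product in \eqref{Bennett's formula} unchanged, so one may restrict to the subsequence with $a_n > 0$ before invoking the lemma.

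The only point that deserves comment is that the phases $\varphi_{\gamma_n}$ do not appear in the answer, so no bookkeeping about them is needed. This is the content of the Kronecker--Weyl equidistribution underlying Wintner's theorem: under LI, the trajectory $t \mapsto (\gamma_n t \bmod 2\pi)_n$ is equidistributed on the infinite torus, and time-averages of $h_\Omega(t)$ coincide with spatial averages of $\sum_n a_n \cos(x_n)$ over independent $x_n$ uniform on $[0,\pi]$, irrespective of any initial phase shifts. Plugging $a_n = \mathcal B_\Omega(\gamma_n)$ into \eqref{Bennett's formula} therefore yields the claimed expression for $p_\Omega(u)$. I do not foresee a substantive obstacle — all the analytic work has already been done in Sections~2 and~3, and this final step is essentially a bookkeeping application of the preceding lemma.
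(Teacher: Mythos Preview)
Your proposal is correct and follows exactly the route the paper takes: the paper gives no separate proof for this corollary, treating it as immediate from the preceding reduction of $H_\Omega$ to $h_\Omega(t)$ via the first lemma of \S\ref{sec:distribution}, together with the Bennett-formula lemma applied to the cosine expansion \eqref{sum for h}. The checks you list (absolute convergence of $\sum a_n$, nonvanishing of infinitely many $a_n$ via Lemma~\ref{lemma:I>>}, irrelevance of the phases under LI, and the harmless role of any $a_n=0$ through $J_0(0)=1$) are precisely the verifications implicit in the paper's surrounding discussion.
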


 Note that $p_\Omega(u)$ is the probability distribution function (PDF) of the random cosine sum
$$g_\Omega(t) = \sum_{n=1}^\infty \mathcal B_\Omega(\gamma_n)\cos x_n
$$
with $x_n\in [0,\pi]$ uniform independent identically distributed (IID) random variables.
Random combinations of cosine waves of the form
  $$ Z=\sum_{n=1}^N a_n \cos x_n$$
  with $a_n>0$ and $x_n$ uniform IID random variables, have been studied, starting with Lord Rayleigh \cite{Rayleigh}
   in the context of random flights (Pearson's problem of the random walk), where one wants to find the distribution of the sum of $N$ vectors with specified lengths $a_n$ and randomly distributed phases,
$Z$ being the real part of the sum of the random vectors $a_ne^{ix_n}$. They were used for  in the theory of multi-channel carrier telephony (see \cite{Bennett, Slack}) or for modeling sea waves (see \cite{Flower}).

\section{Prime points in a superellipse}\label{sec:PP_ell}

The superellipse is the planar domain $\Omega_k$ bounded by the Lam\'{e} curve   $ x^{2k}+y^{2k}=1$, where $k\geq 2$ is an integer.  The boundary curve $\partial \Omega_k $ is smooth, but the curvature vanishes at the points $(\pm 1,0)$, $(0,\pm 1)$, which are the vertices of the curve, see Figure~\ref{fig superellipse}.
  \begin{figure}[!htb]
\begin{center}
  \includegraphics[width=80mm]
{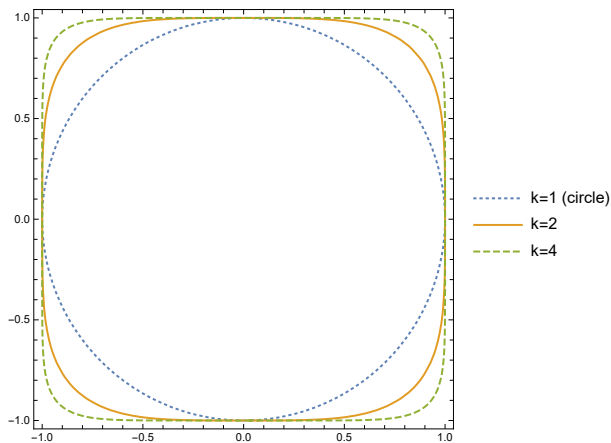}
 \caption{ The superellipse $x^{2k}+y^{2k}\leq 1$, for $k=1$ (a circle), $k=2$ and $k=4$. }
 \label{fig superellipse}
\end{center}
\end{figure}
As noted in the Introduction, the ordinary lattice point count $N_{\Omega_k}(R) = \#\{  \Z^2\cap R\Omega_k\}$ is anomalous in that the remainder term is larger than in ovals, due to the existence of points (namely the vertices) on the boundary $\partial \Omega_k$ where the curvature vanishes to order $2k-2$ (recall $k\geq 2$), and at which the normal to the curve has rational slope. Indeed, already van der Corput in his thesis (see also \cite{Randol1, CdV}) showed that the remainder term $N_{\Omega_k}(R)-\area(\Omega_k)R^2$ is as large as $R^{1-1/(2k)}$ for arbitrarily large $R$, unlike the upper bound of $O(R^{2/3})$ (and conjecturally $O(R^{1/2+o(1)})$) for ovals.

We examine our prime lattice point count $\psi_{\Omega_k}(R)$ for the superellipse, and find that unlike the ordinary lattice point count, the prime lattice point count behaves in the same way as it does for ovals, namely  that
$$ \psi_{\Omega_k}(R) =\area(\Omega_k) R^2+O(R^{3/2})$$
and that the remainder term
$$ H_{\Omega_k}(R) = \frac{ \psi_{\Omega_k}(R) -\area(\Omega_k)R^2}{R^{3/2}}
$$
has a limiting (logarithmic) distribution function, given by a similar formula as for the case of ovals.

\bigskip

Most arguments in \S\ref{sec:PP_f}  carry over to this case.
The only change is in the asymptotic evaluation of the Mellin transforms \eqref{def osc} in Lemma~\ref{lem:asymp tilde f}, where nonvanishing curvature at the vertices is used.
Here, we can evaluate them  directly:
The boundary in the positive quadrant is defined as the graph of the function $f(x) = (1-x^{2k})^{1/(2k)}$ which coincides with its inverse: $g(y)=f(y)$. The Mellin transforms are given by
$$
\oscI_1(\rho) = \oscI_2(\rho) = \int_0^1 (1-x^{2k})^{1/(2k)}x^{\rho-1}dx
= \frac 1{2k} B\Big(1+\frac 1{2k},\frac{\rho}{2k}\Big)
$$
and hence
$$
\oscI_1(\rho)\sim \frac{ \Gamma(1+\tfrac 1{2k})}{2k}\rho^{-(1+\tfrac 1{2k})}
$$
by Stirling's formula (note the exponent $1+\tfrac 1{2k}$ is smaller than the exponent $3/2$ obtained in Lemma~\ref{lem:asymp tilde f} for ovals).
Hence we obtain
$$
\psi_{\Omega_k}(R) = \area(\Omega_k)R^2 + R^{3/2}\tilde H_{\Omega_k}(R) + O\Big( R^{4/3}(\log R)^{7/2} \Big)
$$
with
$$
\tilde H_{\Omega_k}(R) = -\frac {4}{k}\sum_\rho B\Big(1+\frac 1{2k},\frac{\rho}{2k}\Big)  R^{\rho-1/2} .
$$
See Figure~\ref{fig superellipsedist} for plots of the value distribution $p_{\Omega_k}$, by using \eqref{Flowers} with 1000 zeros.
\begin{figure}[!htb]
  \begin{center}
    \includegraphics[width=80mm]{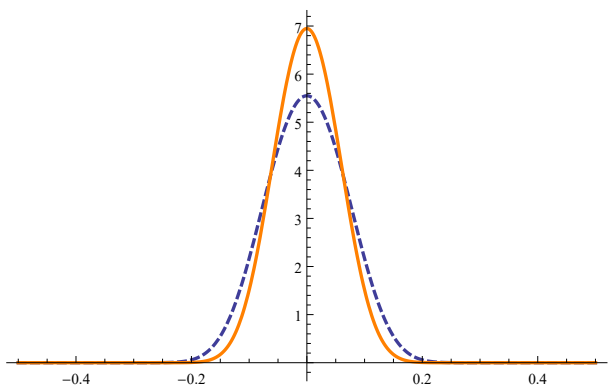}
    \caption{ The value distribution function $p_{\Omega}$ (rescaled) for the superellipse $x^{2k}+y^{2k}\leq 1$, for $k=2$ (dashed) and $k=4$ (solid). }
    \label{fig superellipsedist}
  \end{center}
\end{figure}

\end{document}